\newtheorem{thm}{Theorem}[section]
\newtheorem{cor}[thm]{Corollary}
\newtheorem{lem}[thm]{Lemma}
\newtheorem{prop}[thm]{Proposition}
\newtheorem{defn}[thm]{Definition}
\newtheorem{rem}[thm]{Remark}
\newcommand{\lesi}{\lesssim}
\newcommand{\f}{\frac}
\newcommand{\Om}{\Omega}
\newcommand{\vc}{\infty}
\newcommand{\Rn}{\mathbb{R}^n}
\newcommand{\di}{{\rm div}}
\title[Global Lorentz estimates for parabolic equations]{Global Lorentz estimates for nonlinear parabolic equations on nonsmooth domains}         
\author{The Anh Bui}
\address{Department of Mathematics, Macquarie University, NSW 2109,
Australia}
\email{the.bui@mq.edu.au, bt\_anh80@yahoo.com}
\author{Xuan Thinh Duong}
\address{Department of Mathematics, Macquarie University, NSW 2109,
Australia}
\email{xuan.duong@mq.edu.au}
\keywords{nonlinear parabolic equation, Reifenberg flat domain, Lorentz spaces}
\subjclass[2010]{35J48, 35B65 }
\begin{document}

\date{}

\maketitle

\begin{abstract} Consider the nonlinear parabolic equation in the form
$$
u_t-\di \mathbf{a}(D u,x,t)=\di\,(|F|^{p-2}F) \quad \text{in} \quad \Om\times(0,T),
$$	
where $T>0$ and $\Om$ is a Reifenberg domain. We suppose that the nonlinearity $\mathbf{a}(\xi,x,t)$ has a small BMO norm with respect to $x$ and is merely measurable and bounded with respect to the time variable $t$. In this paper, we prove the global Calder\'on-Zygmund estimates for the weak solution to this parabolic problem in the setting of Lorentz spaces which includes the estimates in Lebesgue spaces. Our global Calder\'on-Zygmund estimates extend certain previous results to equations with less regularity assumptions on the nonlinearity $\mathbf{a}(\xi,x,t)$ and to more general setting of Lorentz spaces.
\end{abstract}

\tableofcontents

\section{Introduction}

Let $\Om$ be a bounded open domain in $\Rn$. For $\f{2n}{n+2}<p<\vc$, we consider the following parabolic equation
\begin{equation}\label{ParabolicProblem}
\left\{
\begin{aligned}
&u_t-\di \mathbf{a}(D u,x,t)=\di\,(|F|^{p-2}F) \quad &\text{in}& \quad \Om_T,\\
&u=0 \quad &\text{on}& \quad \partial_p\Om_T,
\end{aligned}\right.
\end{equation}
where $T>0$ is a given positive constant, $\Om_T=\Om\times (0,T)$,  $\partial_p\Om_T=(\partial\Om\times (0,T))\cup (\Om\times\{0\})$, and $F=(F_1,\ldots, F_n)\in L^p(\Om_T, \Rn)$ is a given vector-valued function. Throughout the paper, we denote $u_t=\f{\partial u}{\partial t}$ and $Du=D_xu=(D_{x_1}u,\ldots, D_{x_n}u)$.

In this paper, we assume that the nonlinearity $\mathbf{a}=(\mathbf{a}^1,\ldots,\mathbf{a}^n): \mathbb{R}^n\times \mathbb{R}^n\times \mathbb{R}\to \mathbb{R}^n$ in \eqref{ParabolicProblem} is measurable in $(x,t)$ for every $\xi$ and continuous in $\xi$ for a.e. $(x,t)$, and satisfies the following conditions: there exist $\Lambda_1, \Lambda_2>0$ so that 
	\begin{equation}\label{eq1-functiona}
	|\mathbf{a}(\xi,x,t)|+|\xi||D_\xi\mathbf{a}(\xi,x,t)|\leq \Lambda_1|\xi|^{p-1},
	\end{equation}
	and
	\begin{equation}\label{eq2-functiona}
	\langle \mathbf{a}(\xi,x,t)-\mathbf{a}(\eta,x,t), \xi -\eta\rangle\geq \Lambda_2\begin{cases}
	|\xi-\eta|^p, \ \ & p\geq 2,\\
	(|\xi|+|\eta|)^{p-2}|\xi-\eta|^2, \ \ & \f{2n}{n+2}<p<2.
	\end{cases}
	\end{equation}
	for a.e $\xi,\eta  \in \mathbb{R}^n$ and a.e. $(x,t)\in \mathbb{R}^n\times \mathbb{R}$.

Note that an example of such a nonlinearity $\mathbf{a}(\xi,x,t)$ satisfying these conditions is the
standard $p$-Laplacian $\Delta_p u={\rm div}(|D u|^{p-2}u)=\di a(\xi,x,t)$ corresponding to $\mathbf{a}(\xi,x,t)=|\xi|^{p-2}\xi$.

A function $u\in C(0,T; L^2(\Om))\cap L^p(0, T; W^{1,p}_0(\Om))$ is said to be a weak solution to the problem \eqref{ParabolicProblem} if the following holds true
\begin{equation}
\label{eq-weak solution}
\int_{\Om_T}u\varphi_t dxdt -\int_{\Om_T}\langle {\bf a}(Du, x,t), D\varphi\rangle dxdt = \int_{\Om_T}\langle |F|^{p-2}F, D\varphi\rangle dxdt,
\end{equation} 
for every test function $\varphi \in C^\vc_0(\Om_T)$.

Due to the lack of regularity with respect to the time variable, the weak solution $u$ to the problem \eqref{ParabolicProblem} could not be chosen as a test function in the formula \eqref{eq-weak solution}. In order to overcome this trouble, we make use of the Steklov averages, and take the test function $\varphi=u$ which is possible modulo Steklov averages. For further details on the Stelkov averages and their applications to parabolic equations, we refer to \cite{B}.

It is well-known that if $|F|\in L^p(\Om_T)$, then the equation \eqref{ParabolicProblem} has a unique weak solution $u$ satisfying the following Calder\'on-Zygmund estimate
\begin{equation}
\label{L^p-boundedness}
\sup_{0<t<T}\|u(\cdot, t)\|_{L^2(\Om)}+\|Du\|_{L^p(\Om_T)}\leq C\|F\|_{L^p(\Om_T)}.
\end{equation}
Hence, it is natural to raise the question on extending the  Calder\'on-Zygmund estimate \eqref{L^p-boundedness} to various functions spaces.

The Calder\'on-Zygmund theory for the weak solution to the partial differential equations including both elliptic and parabolic equations has been received a great deal of attention by many mathematicians. See for example \cite{AM, AM1, BF, CP,CZ, F, KL,KL2, KZ, K, Gia, GM, Giu, JK, IS, L, LSU, M, Mo, S, To} and the references therein. The main aim of this paper is to study the regularity problem regarding to the parabolic equation \eqref{ParabolicProblem}. We now list some of works related to the research direction.

\begin{enumerate}[(a)]
	\item In \cite{KL}, the authors proved the regularity (higher integrability) of the weak solutions to the following second order parabolic system in the following general form including the system of $p$-Laplacian type
$$
u_t=\di\, A_i(Du, x,t)+B_i(Du,x,t), i=1,\ldots, N,
$$
where the nonlinearities $A_i$ and $B_i$ satisfy growth conditions and some other conditions. The regularity of very weak solutions to this systems was obtained in \cite{KL2}. We refer to \cite[Section 2]{KL2} for the definition of a very weak solution.

\item In \cite{AM}, the authors considered the parabolic equation in the form
\begin{equation}\label{eq-AM}
u_t-\di\, (a(x,t)|Du|^{p-2}Du)=\di\, (|F|^{p-2}F), p>\f{2n}{n+2},
\end{equation}
in the cylindrical domain $\Om_T=\Om\times (0,T)$, where $\Om$ is a bounded open subset in $\Rn$, and the coefficient $a(x,t)$ belongs to VMO spaces and satisfies $0<\nu\leq a(x,t)\leq L<\vc$. Then they proved the local $W^{1,q}$ regularity for the weak solution to this problem. More precisely, it was shown that if $|F|^p\in L^q_{\rm loc}(\Om_T)$ for $q>1$ then $|Du|^p\in L^q_{\rm loc}(\Om_T)$. It is important to note that they introduced a new technique based on their own results in \cite{AM1}. This technique is an effective tool in studying regularity problems of partial differential equations. Moreover, the Calder\'on--Zygmund estimates with measurable dependence with respect to time was obtained in \cite{DMS}.

\item In \cite{BR1}, the global Calder\'on-Zygmund theory for the weak solution to the problem \eqref{ParabolicProblem} was investigated. It was proved that if $|F|^p\in L^q(\Om_T)$ for $q>1$ then $|Du|^p\in L^q(\Om_T)$. This result extends those in \cite{BW2}. Note that in \cite{BR1}, the assumptions only require the nonlinearity ${\bf a}$ to have a small BMO norms with respect to both $x$ and $t$, and the domain $\Om$ is flat in Reifenberg's sense. 
\end{enumerate}

It is worth noticing that although the estimates for the gradient of the weak solutions to the parabolic problems on Lebesgue spaces $L^p$ have been well-known, the global estimates for the gradient of the weak solutions on the Lorentz spaces for the case $p\neq 2$ are less well-known, and even have not been established so far.

Recently, in \cite{Ba}, the author adapted the technique in \cite{AM, AM1} to extend the result in \cite{AM} to prove the local Lorentz estimates for the gradient of weak solutions to the problem \eqref{eq-AM}. More precisely, assume that the coefficient $a(x,t)$ satisfies a VMO condition with respect to $x$ and satisfies $0<\nu\leq a(x,t)\leq L<\vc$. It was prove that if $|F|\in L^{q,r}(Q_{2R}(z_0))$ for $q>p, 0<r\leq \vc$ and $Q_{2R}(z_0)\subset \Om_T$ then $|Du|\in L^{q,r}(Q_{R}(z_0)$, where $L^{q,r}$ is a Lorentz space. See Definition \ref{defn-Lorentzspaces}.

The main aim of this paper is to prove the global Cader\'on-Zygmund type estimates for the weak solution to the parabolic equation \eqref{ParabolicProblem} on the Lorentz spaces. We now set up the assumptions and then state the main result of the paper.

In what follows, for a measurable function $f$ on a measurable subset $E$ in $\mathbb{R}^{n}$ (or, in $\mathbb{R}^{n}\times (0,\vc)$) we define
$$
\overline{f}_E =\fint_E f =\f{1}{|E|}\int_E f.
$$
We set
$$
\Theta(\mathbf{a},B_r(y))(x,t)=\sup_{\xi\in \mathbb{R}^n\backslash \{0\}}\f{|\mathbf{a}(\xi,x,t)-\overline{\mathbf{a}}_{B_r(y)}(\xi,t)|}{|\xi|^{p-1}}
$$
where
$$
\overline{\mathbf{a}}_{B_r(y)}(\xi,t)=\fint_{B_r(y)}\mathbf{a}(\xi,x,t)dx.
$$
Throughout this paper we always assume that the nonlinearity ${\bf a}$ satisfy \eqref{eq1-functiona} and \eqref{eq2-functiona}. Additionally, we also require the small BMO semi-norm conditions on the nonlinearity ${\bf a}$. 
\begin{defn}
	Let $R_0,\delta>0$. The nonlinearity ${\bf a}$ satisfies the small $(\delta, R_0)$-{\rm BMO} semi-norm condition if 
	\begin{equation}\label{eq2-Assumption}
	[{\bf a}]_{2,R_0}:=\sup_{y\in \mathbb{R}^n}\sup_{0<r\leq R_0, 0<\tau<r^2}\,\fint_{Q_{(r,\tau)}(y)}|\Theta(\mathbf{a},B_r(y))(x,t)|^2dxdt\leq \delta^2.
	\end{equation}
\end{defn}
\begin{rem}
	
	\noindent (a) The nonlinearity ${\bf a}$ as in \eqref{eq2-Assumption} is assumed to be merely measurable only in the time variable $t$ and belong to the class BMO (functions with bounded mean oscillations) as functions of the spatial variables $x$. To see this, we now consider the following example. If  ${\bf a}(\xi,x,t)=b(\xi,x)c(t)$, then \eqref{eq2-Assumption} requires small BMO norm regularity for $b(\xi,\cdot)$, whereas $c(\cdot)$ is just needed to be bounded and measurable. This contrast to those used in \cite{BR1, BW2} in which the nonlinearity ${\bf a}$ is required to belong to the class BMO in both variables $t$ and $x$. Note that the condition \eqref{eq2-Assumption} is similar to that used in \cite{K} to study the parabolic and elliptic equations with VMO coefficients. We refer to \cite{Sa} for the definition of VMO functions.

	\noindent (b) Under the conditions \eqref{eq1-functiona}, \eqref{eq2-functiona} and \eqref{eq2-Assumption}, it is easy to see that for any $\gamma\in [1,\vc)$ there exists $\epsilon>0$ so that 
	$$
	[{\bf a}]_{\gamma,R_0}
	:=\sup_{y\in \mathbb{R}^n}\sup_{0<r\leq R_0, 0<\tau<r^2}\,\fint_{Q_{(r,\tau)}(y)}|\Theta(\mathbf{a},B_r(y))(x,t)|^\gamma dxdt\lesi \delta^\epsilon.
	$$
\end{rem}

Concerning  the underlying domain $\Omega$, we do not assume any smoothness condition on $\Omega$, but the following  flatness condition.
\begin{defn}\label{defn2}
	Let $\delta, R_0>0$. The domain $\Om$ is said to be a $(\delta, R_0)$ Reifenberg flat domain if for every $x\in \partial\Om$ and $0<r\leq R_0$, then there exists a coordinate system depending on $x$ and $r$, whose variables are denoted by $y=(y_1,\dots,y_n)$ such that in this new coordinate system $x$ is the origin and
	\begin{equation}\label{eq1-Assumption}
	B_{r}\cap\{y: y_n>\delta r\}\subset B_{r}\cap \Omega \subset \{y: y_n>-\delta r\}.
	\end{equation}
\end{defn}

\begin{rem}\label{rem1}
	(a) The condition of  $(\delta, R_0)$-Reifenberg flatness condition was first introduced in \cite{R}. This condition does not require any smoothness on the boundary of $\Om$, but
	sufficiently flat in the Reifenberg's sense. The Reifenberg flat domain includes domains with rough boundaries of fractal
	nature, and Lipschitz domains with small Lipschitz constants.
	For further discussions about the Reifenberg domain, we refer to \cite{R, DT, Toro, P} and the references therein. 
	
	(b) If $\Om$ is a $(\delta, R_0)$ Reifenberg domain, then for any $x_0\in \partial \Om$ and $0<\rho<R_0(1-\delta)$ there exists a coordinate systems, whose variables are denoted by $y=(y_1,\ldots, y_n)$ such that in this coordinate systems the origin is some interior point of $\Om$ and $x_0=(0,\ldots, 0, -\f{\delta \rho}{1-\delta})$ and
	$$
	B_{\rho}^+\subset B_{\rho}\cap \Om\subset B_{\rho}\cap  \left\{y: y_n>-\f{2\delta \rho}{1-\delta}\right\}.
	$$
	From now on, we always assume that $0<\delta<1/10$.

	(c) For $x\in \Om$ and $0<r<R_0$, we have
	\begin{equation}
	\label{eq1-Reifenberg domain}
	\f{|B_r(x)|}{|B_r(x)\cap \Om|}\leq \Big(\f{2}{1-\delta}\Big)^n\leq 4^n.
	\end{equation}
\end{rem}
We recall the definition of the Lorentz spaces. 

\begin{defn}
	\label{defn-Lorentzspaces}
Let $0<q<\vc$ and $0<r\leq \vc$, and let $U$ be an open subset in $\Om_T$. The Lorentz space
$L^{q,r}(U)$ is defined as the set of all measurable functions $f$ on $U$ such that
$$
\|f\|_{L^{q,r}(U)}:=
\begin{cases}
\left\{q\int_0^\vc \left[t^q |\{z\in U: |f(z)|>t\}|\right]^{r/q}\f{dt}{t}\right\}^{1/r}<\vc, \ \ &r<\vc,\\
\sup_{t>0}t|\{z\in U: |f(z)|>t\}|^{1/q}, \ \ & r=\vc.
\end{cases}
$$
\end{defn}
In the particular case $q=r$, the weighted Lorentz spaces $L^{q,q}(U)$ coincide with the $L^q(U)$ spaces. We have the inclusion $L^p(U)\subset L^{q,r}(U)$ for all $q>p, 0<r\leq \vc$. 

Our main result is the following theorem.
\begin{thm}\label{mainthm1}
	Let $q\in (p,\vc)$ and $0<r\leq \vc$. Then there
	exists a positive constant $\delta$ such that the following holds. If $|F|\in L^{q,r}(\Om_T)$, the domain $\Omega$ is a $(\delta, R_0)$-Reifenberg flat domain, and the nonlinearity ${\bf a}$  satisfies \eqref{eq1-functiona}, \eqref{eq2-functiona} and \eqref{eq2-Assumption}, then the problem \eqref{ParabolicProblem} has a unique weak  solution $u$ satisfying the estimate:
	\begin{equation}\label{eq-LorentzEs}
	\left\|\,|Du|\,\right\|_{L^{q,r}(\Om_T)} \lesi	\left[\left\|\,|F|\,\right\|_{L^{q,r}(\Om_T)}+1\right]^d,
	\end{equation}
	where 
	$$
	d=\begin{cases}
	\f{p}{2}, \ \ \ & p\geq 2\\
	\f{2p}{2p+np-2n}, \ \  \ & \f{2n}{n+2}<p<2.
	\end{cases},
	$$
\end{thm}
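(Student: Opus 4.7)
The plan is to extend the known $L^q$ Calder\'on--Zygmund theory from \cite{BR1} to the scale of Lorentz spaces via a level set / good-$\lambda$ argument, handling the intrinsic scaling that is forced on us by $p \neq 2$. Throughout I would work with parabolic cylinders of the form $Q_{(r,\lambda^{2-p}r^2)}(z_0)$ in order to absorb the nonlinear scaling of the equation and I would use the Hardy--Littlewood maximal function $\mathcal{M}$ acting over parabolic cylinders intersected with $\Omega_T$.

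First I would establish the two basic comparison estimates on a cylinder $Q$ (both interior and boundary):
\begin{enumerate}[(i)]
\item Let $v$ be the unique weak solution of the homogeneous equation $v_t - \operatorname{div}\mathbf{a}(Dv,x,t)=0$ with the same boundary/initial values as $u$ on $Q$. Using $u-v$ as a test function (modulo Steklov averages) together with the monotonicity condition \eqref{eq2-functiona} and Young's inequality yields $\fint_{Q} |Du-Dv|^{p}\, \lesi \fint_{Q}|F|^{p}$ for $p\geq 2$, with the analogous weighted estimate in the sub-quadratic regime.
\item Let $w$ be the solution of the frozen-coefficient problem $w_t-\operatorname{div}\bar{\mathbf a}_{B_r}(Dw,t)=0$ agreeing with $v$ on $\partial_p Q$. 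The small BMO hypothesis \eqref{eq2-Assumption} together with \eqref{eq1-functiona} gives $\fint_Q |Dv-Dw|^{p}\, \lesi \delta^{\epsilon}\fint_Q |Dv|^{p}$.
\end{enumerate}
Since $\bar{\mathbf a}_{B_r}(\cdot,t)$ depends only on $t$, the reference solution $w$ enjoys a Lipschitz-type sup estimate $\|Dw\|^{p}_{L^{\vc}(\frac12 Q)} \lesi \fint_Q |Dw|^{p}$, both in the interior and, using the Reifenberg flatness \eqref{eq1-Assumption} and the reflection argument of \cite{BR1}, up to the lateral boundary. Chaining the three estimates produces the crucial control $|\{z\in Q : |Du|^{p}>A\lambda\}|\leq \epsilon|Q|$ whenever the cylinder $Q$ is a \emph{bad} one in the sense that $\fint_Q |Du|^p \leq \lambda$ and $\fint_Q |F|^p \leq \delta \lambda$, where $\epsilon$ can be made arbitrarily small by shrinking $\delta$.

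Next I would upgrade this local density estimate to a global good-$\lambda$ inequality by a Vitali/Calder\'on--Zygmund stopping time construction adapted to parabolic cylinders (this is where intrinsic scaling forces the exponent $d$ to appear, since a cylinder at level $\lambda$ has time length $\lambda^{2-p}r^2$, and reversing from $|Du|^p$ back to $|Du|$ brings the exponent $d$ in the sub-quadratic case). The outcome is an estimate of the form
\begin{equation*}
\bigl|\{\mathcal{M}(|Du|^{p})>A\lambda\}\bigr| \leq \epsilon\,\bigl|\{\mathcal{M}(|Du|^{p})>\lambda\}\bigr|+\bigl|\{\mathcal{M}(|F|^{p})>\delta\lambda\}\bigr|
\end{equation*}
for all $\lambda\geq \lambda_0$, with $\lambda_0$ determined by $\|F\|_{L^p(\Om_T)}$ (this is the source of the additive $+1$ in \eqref{eq-LorentzEs}). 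Multiplying by $\lambda^{q/p-1}$, integrating in $\lambda$ with the $L^{q/p,r/p}$ quasi-norm on the left, and absorbing the $\epsilon$ term into the left-hand side using the boundedness of $\mathcal M$ on Lorentz spaces, one converts this into $\|\mathcal{M}(|Du|^p)\|_{L^{q/p,r/p}} \lesi \|\mathcal{M}(|F|^p)\|_{L^{q/p,r/p}} + 1$. Taking the $p$-th root then gives \eqref{eq-LorentzEs}, with the exponent $d$ coming from the intrinsic scaling in the sub-quadratic case.

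The main obstacle, I expect, is step (i)--(ii) uniformly up to the boundary: the Reifenberg flatness only provides flatness at small scales, so one needs to carefully localize in space-time via a covering by cylinders that are either fully interior or centred near $\partial\Om$, and then perform the reflection in the flat approximating half-space while controlling the error terms arising from the tilt $\delta r$. The comparison with the frozen-coefficient problem is delicate because $\bar{\mathbf a}_{B_r}$ is only measurable in $t$, so one cannot invoke classical regularity; instead one must use DiBenedetto-type intrinsic scaling arguments and the Kuusi--Mingione style bounds (cf.\ \cite{K,KL}) to get the $L^\vc$ gradient bound for $Dw$. Once this piece is in place, the rest is a bookkeeping exercise in distributional integration over level sets.
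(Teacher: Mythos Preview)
Your comparison estimates (i)--(ii) and the boundary treatment are essentially what the paper does, but the final ``globalization'' step has a genuine gap. You propose to package the level-set information into a good-$\lambda$ inequality for the parabolic Hardy--Littlewood maximal function $\mathcal{M}$ and then quote boundedness of $\mathcal{M}$ on Lorentz spaces. This does not work when $p\neq 2$: the comparison estimates are only valid on \emph{intrinsic} cylinders $Q_{r}^{\lambda}$ whose aspect ratio depends on the level $\lambda$ itself, so there is no single sublinear operator $\mathcal{M}$ (built on fixed-shape cylinders) whose super-level sets encode the Vitali stopping-time construction at every level. The paper states this explicitly in the introduction and adopts the maximal-function-free approach of Acerbi--Mingione: at each level $\lambda$ one runs a Vitali covering by intrinsic cylinders and obtains a direct distributional inequality for $|Du|$ (Proposition~\ref{prop2}),
\[
|E_{s_1}(N_0\lambda)|\leq \epsilon^p\Big[|E_{s_2}(\lambda/4)|+\frac{1}{(\delta\lambda)^{\eta}}\int_{\delta\lambda/4}^{\infty}t^{\eta}\,|\{|F|>t\}\cap K_{s_2R}|\,\frac{dt}{t}\Big],
\]
with two concentric radii $s_1<s_2$. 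No maximal operator appears.

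Your absorption step is also incomplete. Even with the correct level-set inequality above, you cannot simply ``absorb the $\epsilon$ term'' because $\||Du|\|_{L^{q,r}}$ is not known a priori to be finite. The paper closes this by (a) truncating $|Du|_k=\min\{k,|Du|\}$, (b) carrying the two radii $s_1<s_2$ so that the right-hand side lives on a slightly larger set than the left, and (c) applying the Han--Lin iteration lemma (Lemma~\ref{HF'sLemma}) to pass from $s_1$ to $s_2$; only then can one let $k\to\infty$. Finally, the $F$-term on the right is not a pure level-set of $|F|$ but an $\eta$-integral in $t$; converting this to $\|F\|_{L^{q,r}}$ requires Hardy's inequality when $r>q$ and a reverse-H\"older trick (Lemma~\ref{lem-reverseHolder}) when $0<r\leq q$, handled case by case. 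Your outline skips these ingredients, and without them the passage from the local density estimate to \eqref{eq-LorentzEs} does not go through.
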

We note that the presence of the exponent $d$ on the right hand side of \eqref{eq-LorentzEs} is reasonable due to the scaling deficit of the problem \eqref{ParabolicProblem}. See for example \cite{AM}.

Some comments in the borderline $q=p$ are in order. In the case $r=p$ this is not difficult as it comes directly from testing
the equation. In the other cases this is not easy and requires to tilt the estimates
around the natural growth exponent.  For instance this was done in \cite{AP} for nonlinear elliptic equations which made use of estimates below the natural growth exponent and Hodge decompositions. In the parabolic case such tools can be replaced by the methods in \cite{BDM} and might be done in future work.

We now point out the technique used in this paper. In most of papers on the global Lorentz estimates for the gradient of the weak solutions to elliptic and parabolic equations, the Hardy-Littlewood maximal function technique is often employed, see for example \cite{BW2,  QHN, MP, MP2}. In particular case of the parabolic equation in the form \eqref{ParabolicProblem} and some of its specific cases, the maximal function technique works well only in the case $p=2$, see for example \cite{BW2, QHN}. However, the case $p\neq 2$ rules out the homogeneity of the parabolic equations. This is a main reason why the maximal function technique may not be applicable in Lorentz settings. To overcome this trouble, we adapt the free maximal function technique in \cite{AM, AM1, Ba, BR1} which make use of the approximation method in \cite{CP} and the Vitali covering lemma.  However, we would like to point out that the {\it priori regularity assumption approach} as in \cite{AM, AM1, BR1} seems not to be suitable to our setting. This requires some improvements and signification modifications in our approach. 

\medskip

We would like to comment on the main contributions of this paper in comparison with some known results.
\begin{enumerate}[{\rm (a)}]
\item The assumption \eqref{eq2-Assumption} we imposed on the nonlinearity ${\bf a}$ is weaker than that in \cite{BR1}. Precisely, the assumption \eqref{eq2-Assumption} on the nonlinearity ${\bf a}$ is assumed to be merely measurable in the time variable $t$ and having a small BMO semi-norm as functions of the spatial variables $x$. Meanwhile, in \cite{BR1} the nonlinearity ${\bf a}$ is required to belong to the class BMO in both variables $t$ and $x$. Moreover, our paper treats the global estimates in Lorentz settings which extends the global $L^p$ estimates in \cite{BR1}. 

\item The parabolic problem of the form \eqref{eq-AM} studied in \cite{AM,Ba} is a particular case of our equation \eqref{ParabolicProblem} corresponding to ${\bf a}(\xi,x,t)=a(x,t)|\xi|^{p-2}\xi$. Moreover, our small BMO semi-norm condition \eqref{eq2-Assumption} with respect to $x$ is weaker than the VMO condition in \cite{Ba} and the regularity condition in \cite{AM}. More importantly, in this paper, we prove the global estimates for the weak solution rather than the local estimates as in \cite{AM,Ba}. This difference requires some challenging comparison estimates near the boundary. See Section 2.2.  

\item The global weighted Lorentz estimates for the gradient of the weak solution to the problem \eqref{ParabolicProblem} corresponding to $p=2$ was obtained in \cite{QHN} by using the maximal function technique. As mentioned earlier, this technique is not applicable to our settings due to the lack of homogeneity, and in the particular case $p=2$, we recover the results in \cite{QHN} for the unweighted Lorentz estimates by a different approach. 
\end{enumerate}

The organization of the paper is as follows. In Section 2, we prove some interior and boundary estimates for the weak solution to the problem \eqref{ParabolicProblem}. The proof of Theorem \ref{mainthm1} will be represented in Section 3.

Throughout the paper, we always use $C$ and $c$ to denote positive constants that are independent of the main parameters involved but whose values may differ from line to line. We will write
$A\lesi B$ if there is a universal constant $C$ so that $A\leq CB$ and $A\sim B$ if $A\lesi B$ and $B\lesi A$.

We will end this section with some notations which will be used in the paper.
\begin{itemize}
	\item $B_r=\{y: |y|<r\}$, $B_r^+=B_r\cap \{y=(y_1,\ldots,y_n): y_n>0\}$.
	\item $B_r(x)=x+B_r$, $B^+_r(x)=x+B^+_r$.
	\item $Q_{r,\tau}=B_r\times (-\tau, \tau)$, $Q_r=Q_{r,r^2}$, $Q_{r,\tau}^+=Q_{r,\tau}\cap\{z=(x',x_n,t): x_n>0\}$, and $Q^+_r=Q_{r,r^2}^+$.
	\item For $z=(x,t)$, $Q_{r,\tau}(z)=z+Q_{r,\tau}$, and $Q^+_{r,\tau}(z)=z+Q^+_{r,\tau}$.
	\item For $z=(x,t)$, $Q_{r,\tau}(z)=z+Q_{r,\tau}$, and $Q^+_{r,\tau}(z)=z+Q^+_{r,\tau}$.
	\item $K_{r,\tau}(z)=Q_{r,\tau}(z)\cap \Om_T, K_{r}=Q_{r}\cap \Om_T$.
	\item $\partial_w K_r=Q_r\cap (\partial \Om\times \mathbb{R})$.
\end{itemize}

\section{Interior and boundary estimates for the weak solutions}



{\it In this section, we always assume that the nonlinearity ${\bf a}$ satisfies \eqref{eq1-functiona}, \eqref{eq2-functiona} and the small {\rm BMO} semi-norm condition \eqref{eq2-Assumption}, and the underlying domain is a $(\delta, R_0)$ Reifenberg domain with $R_0=6$ and small $\delta\in (0,1/10)$ which will be determined later.}
\subsection{Interior estimates}
For $z_0=(x_0,t_0)\in \Om\times (0,T)$ such that $B_5\equiv B_5(z_0)\subset \Om$. For simplicity we may assume that $x_0=0$ and $Q_5(z_0)\equiv Q_5\subset \Om_T$.

We have the following higher integrability result. See for example \cite{KL}.
\begin{prop}
	\label{higherInte-Prop1-inter}
	Let $u$ be a weak solution to the problem
	\[
	u_t-\di\, \mathbf{a}(D u,x,t)=\di (|H|^{p-2}H) \quad \text{\rm in} \quad Q_5
	\]
	with $H\in L^{p+\bar{\sigma}}(Q_5)$ for some $\bar{\sigma}>0$. Then there exists $\epsilon_0\in (0,\bar{\sigma}]$ and $\sigma>0$ so that $|Du|\in L^{p+\epsilon_0}(Q_4)$ and
	\[
	\fint_{Q_4}|Du|^{p+\epsilon_0}dxdt\lesi \Big(\fint_{Q_5}|Du|^pdxdt\Big)^\sigma + \fint_{Q_5}|H|^{p+\epsilon_0}dxdt+1. 
	\]
\end{prop}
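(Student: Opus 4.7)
The plan is to follow the now-standard Kinnunen--Lewis strategy for the parabolic $p$-Laplacian type equation, consisting of two ingredients: a reverse Hölder inequality on suitably chosen intrinsic cylinders, followed by a Gehring-type self-improvement. First I would derive a Caccioppoli-type estimate on an arbitrary subcylinder $Q_{\rho,\tau}(z_0)\su Q_5$ by testing the (Steklov-averaged) weak formulation against $\varphi=(u-(u)_{B_\rho})\eta^p$, with $\eta$ a standard parabolic cutoff supported in $Q_{2\rho,2\tau}(z_0)$ and equal to $1$ on $Q_{\rho,\tau}(z_0)$. The structure assumptions \eqref{eq1-functiona} and \eqref{eq2-functiona}, together with Young's inequality and the bound \eqref{L^p-boundedness} applied at smaller scale, produce
\begin{equation*}
\sup_t \f{1}{|B_\rho|}\int_{B_\rho}|u-(u)_{B_\rho}|^2\eta^p + \fint_{Q_{\rho,\tau}}|Du|^p \lesi \fint_{Q_{2\rho,2\tau}}\Big|\f{u-(u)_{B_{2\rho}}}{\rho}\Big|^p + \fint_{Q_{2\rho,2\tau}}|H|^p +1.
\end{equation*}

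Second, to handle the non-homogeneity coming from $p\neq 2$, I would introduce intrinsic cylinders $Q_\rho^\lambda(z_0)=B_\rho(x_0)\times(t_0-\lambda^{2-p}\rho^2,t_0+\lambda^{2-p}\rho^2)$ calibrated so that $\fint_{Q_\rho^\lambda}|Du|^p\sim\lambda^p$. On such intrinsic cylinders, combining the Caccioppoli estimate above with the parabolic Sobolev--Poincaré inequality — applied separately in the two sub-ranges $p\geq 2$ and $\f{2n}{n+2}<p<2$, where the Sobolev exponents differ — yields a reverse Hölder inequality of the shape
\begin{equation*}
\fint_{Q_{\rho/2}^\lambda} |Du|^p \lesi \Big(\fint_{Q_\rho^\lambda} |Du|^{p\theta}\Big)^{1/\theta} + \fint_{Q_\rho^\lambda}|H|^p + 1
\end{equation*}
for some $\theta\in(0,1)$ independent of $\lambda$. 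To produce such intrinsic cylinders everywhere, I would apply a Vitali/stopping-time argument to a (parabolic) maximal function of $|Du|^p$: for each level $\lambda>\lambda_0$, with $\lambda_0^p\sim \fint_{Q_5}|Du|^p+\fint_{Q_5}|H|^p+1$, the super-level set is covered by pairwise disjoint intrinsic cylinders on which the intrinsic calibration, and hence the reverse Hölder inequality, holds.

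Finally, a level-set decomposition of $\int_{Q_4}|Du|^{p+\epsilon_0}$ and a Gehring-type self-improvement lemma adapted to the intrinsic geometry upgrade the reverse Hölder estimate to the claimed higher integrability, for some sufficiently small $\epsilon_0\in(0,\bar\sigma]$; the exponent $\sigma$ on the first term on the right-hand side of the displayed inequality encodes the scaling deficit incurred when one exits the intrinsic framework and converts powers of the level $\lambda$ back into powers of $\fint|Du|^p$. The main obstacle, in my view, is the reverse Hölder step: the intrinsic rescaling must be chosen so that the Caccioppoli term and the parabolic Sobolev--Poincaré exponent balance correctly, which requires a careful case split between $p\geq 2$ (where the natural estimate involves $|Du|^p$) and $\f{2n}{n+2}<p<2$ (where one must use the $V(Du)=(|Du|)^{(p-2)/2}Du$-type quantity to exploit \eqref{eq2-functiona}).
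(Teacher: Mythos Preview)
The paper does not supply its own proof of this proposition: it is stated with the attribution ``See for example \cite{KL}'' and treated as a known result from the literature. Your sketch is precisely an outline of the Kinnunen--Lewis argument that the paper is citing, so in that sense your approach is exactly the one the paper defers to.
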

Let $u$ be a weak solution to \eqref{ParabolicProblem}. We now consider the unique weak solution $w \in C(t_0-5^2,t_0+5^2; L^2(\Om))\cap L^p(t_0-5^2,t_0+5^2; W^{1,p}_0(\Om))$ to the following equation
\begin{equation}\label{AppProb1-interior}
\left\{
\begin{aligned}
&w_t-\di\, \mathbf{a}(D w,x,t)=0 \quad &\text{in}& \quad Q_5,\\
&w=u \quad &\text{on}& \quad \partial_p Q_5.
\end{aligned}\right.
\end{equation}

We now prove the following useful result.
\begin{lem}\label{lem1-inter}
	Let $w$ be a weak solution to the problem \eqref{AppProb1-interior}. Then for each $\epsilon>0$ there exists $C>0$ so that
	\begin{equation}
	\label{eq-lem1-inter}
	\fint_{Q_5}|D(u-w)|^pdxdt\leq \epsilon\fint_{Q_5}|Du|^pdxdt+C\fint_{Q_5}|F|^pdxdt.
	\end{equation}
\end{lem}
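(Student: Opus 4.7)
The plan is to derive a comparison identity by subtracting the weak formulations of \eqref{ParabolicProblem} and \eqref{AppProb1-interior}, and to test the difference with $u-w$. Since $u-w$ vanishes on $\partial_p Q_5$, it is admissible after the standard Steklov-averaging procedure indicated in the paper just after \eqref{eq-weak solution}. The time-derivative term produces $\tfrac{1}{2}\|(u-w)(\cdot,t_0+25)\|_{L^2(B_5)}^2 \geq 0$ (using that $u-w$ vanishes at the bottom of $Q_5$), which we discard, leaving
\[
\int_{Q_5}\langle \mathbf{a}(Du,x,t)-\mathbf{a}(Dw,x,t),\, D(u-w)\rangle\, dxdt \leq \int_{Q_5}|F|^{p-1}|D(u-w)|\,dxdt.
\]

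When $p\geq 2$, the monotonicity assumption \eqref{eq2-functiona} bounds the left by $\Lambda_2 \int_{Q_5}|D(u-w)|^p\,dxdt$, while Young's inequality on the right gives $|F|^{p-1}|D(u-w)|\leq \eta|D(u-w)|^p+C_\eta|F|^p$. Absorbing $\eta|D(u-w)|^p$ into the left yields the even stronger bound $\fint_{Q_5}|D(u-w)|^p\,dxdt\lesi \fint_{Q_5}|F|^p\,dxdt$, so the $\fint|Du|^p$ term in \eqref{eq-lem1-inter} is not needed. For $\f{2n}{n+2}<p<2$, however, monotonicity only provides the degenerate quantity $\int_{Q_5}(|Du|+|Dw|)^{p-2}|D(u-w)|^2\,dxdt$. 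To recover $|D(u-w)|^p$ I would use the pointwise identity
\[
|D(u-w)|^p = \bigl[(|Du|+|Dw|)^{p-2}|D(u-w)|^2\bigr]^{p/2}(|Du|+|Dw|)^{(2-p)p/2},
\]
together with H\"older's inequality with conjugate exponents $2/p$ and $2/(2-p)$ to obtain
\[
\fint_{Q_5}|D(u-w)|^p\,dxdt \leq \Bigl(\fint_{Q_5}(|Du|+|Dw|)^{p-2}|D(u-w)|^2\Bigr)^{p/2}\Bigl(\fint_{Q_5}(|Du|+|Dw|)^p\Bigr)^{(2-p)/2}.
\]

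The first factor is controlled by the comparison identity together with Young's inequality applied to $|F|^{p-1}|D(u-w)|$; the second is controlled by a standard energy estimate for $w$, obtained by testing \eqref{AppProb1-interior} with $w-u$ and using \eqref{eq1-functiona}--\eqref{eq2-functiona} (with $u_t$ estimated in the appropriate dual space from the equation for $u$), which yields $\fint_{Q_5}|Dw|^p \lesi \fint_{Q_5}|Du|^p + \fint_{Q_5}|F|^p$. Setting $A=\fint|D(u-w)|^p$, $B=\fint|F|^p$ and $D=\fint|Du|^p$, these inputs combine into a relation of the shape $A \lesi (\eta A + C_\eta B)^{p/2}(D+B)^{(2-p)/2}$, and a cascade of Young inequalities with small parameters (first to absorb $A$ into the left, then to keep the prefactor of $D$ as small as desired) produces the claimed inequality. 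The main obstacle is precisely this subcritical case: the non-homogeneous monotonicity term obstructs any direct absorption into an $L^p$-norm of $D(u-w)$, and one has to juggle several small parameters simultaneously to recover the symmetric $\epsilon \fint|Du|^p + C\fint|F|^p$ form stated in the lemma.
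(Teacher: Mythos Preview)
Your argument is correct. The treatment of $p\geq 2$ is essentially the paper's (the paper in fact handles this case by the same test--monotonicity--Young routine and omits details). For $\tfrac{2n}{n+2}<p<2$ you reach the same conclusion by a genuinely different route: you apply H\"older's inequality to the identity $|D(u-w)|^p=[(|Du|+|Dw|)^{p-2}|D(u-w)|^2]^{p/2}(|Du|+|Dw|)^{p(2-p)/2}$ to obtain a \emph{multiplicative} splitting, and then need (i) a separate energy estimate $\fint|Dw|^p\lesi\fint|Du|^p+\fint|F|^p$ for the second factor and (ii) a second layer of Young's inequality with carefully ordered parameters to convert the product $(\eta A+C_\eta B)^{p/2}(D+B)^{(2-p)/2}$ into the additive form $\epsilon D+C B$. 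The paper instead applies Young's inequality \emph{pointwise} to the same identity, getting directly the additive bound
\[
\fint_{Q_5}|D(u-w)|^p\leq \tau\fint_{Q_5}(|Du|+|Dw|)^p+c(\tau)\fint_{Q_5}(|Du|+|Dw|)^{p-2}|D(u-w)|^2,
\]
and then uses only the triangle inequality $(|Du|+|Dw|)^p\lesi |D(u-w)|^p+|Du|^p$ in place of your energy estimate. This is shorter: it eliminates the auxiliary bound on $\fint|Dw|^p$ and the nested choice of small parameters. Your approach has the minor advantage that the intermediate H\"older step gives a clean $L^{p/2}$--$L^{2/(2-p)}$ factorization, which can be useful in other contexts, but here it buys nothing extra.
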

\begin{proof}
	The proof of this lemma is standard. However, for the sake of completeness, we provide it here.
	
	\noindent{\bf Case 1: $\f{2n}{n+2}<p<2$.} Observe that
	$$
	|D(u-w)|^p= (|Du|+|Dw|)^{-\f{p(p-2)}{2}}(|Du|+|Dw|)^{\f{p(p-2)}{2}}|D(u-w)|^p.
	$$
	Hence, for $\tau>0$, using Young's inequality we obtain
	\begin{equation}\label{eq1-proof prop1 inter}
	\begin{aligned}
	\fint_{Q_5}|D(u-w)|^pdxdt&\leq \tau\fint_{Q_5}(|Du|+|Dw|)^pdxdt + c(\tau)\fint_{Q_5}(|Du|+|Dw|)^{p-2}|D(u-w)|^2dxdt\\
	&\leq c(p)\tau\Big[\fint_{Q_5}|D(u-w)|^pdxdt +\fint_{Q_5}|Du|^pdxdt\Big]\\
	& \ \ + c(\tau)\fint_{Q_5}(|Du|+|Dw|)^{p-2}|D(u-w)|^2dxdt.
	\end{aligned}
	\end{equation}
	
	Note that, by \eqref{eq2-functiona}, we have
	\begin{equation}\label{eq2-proof prop1 inter}
	\begin{aligned}
	\fint_{Q_5}(|Du|+|Dw|)^{p-2}|D(u-w)|^2dxdt \leq C\fint_{Q_5} \langle {\bf a}(Du,x,t)-{\bf a}(Dw,x,t), Du-Dw \rangle dxdt.
	\end{aligned}
	\end{equation}
	Taking $u-w$ as a test function, we can verify that
	$$
	\fint_{Q_5}\langle {\bf a}(Du,x,t)-{\bf a}(Dw,x,t), Du-Dw \rangle dxdt=-\fint_{Q_5}\langle |F|^{p-2}F,Du-Dw \rangle dxdt.
	$$
	This along with Young's inequality again implies $\upsilon$
	\begin{equation}\label{eq3-proof prop1 inter}
	\fint_{Q_5}\langle {\bf a}(Du,x,t)-{\bf a}(Dw,x,t), Du-Dw \rangle dxdt\leq \tau\fint_{Q_5}|D(u-w)|^p dxdt+ C\fint_{Q_5}|F|^p dxdt.
	\end{equation}
	Taking \eqref{eq1-proof prop1 inter}, \eqref{eq2-proof prop1 inter} and \eqref{eq3-proof prop1 inter} into account, we obtain
	$$
	\begin{aligned}
	\fint_{Q_5}|D(u-w)|^pdxdt&\leq (c(p)+1)\tau\fint_{Q_5}|D(u-w)|^pdxdt +c(p)\tau\fint_{Q_5}|Du|^pdxdt+C\fint_{Q_5}|F|^p dxdt.
	\end{aligned}	
	$$ 
	By taking $\tau$ to be sufficiently small, this follows the desired estimate.
	
	\medskip
	
	\noindent {\bf Case 2: $p\geq 2$.} This case can be done in the same manner and we omit details.
\end{proof}


Let $w$ be a weak solution to \eqref{AppProb1-interior}. We now consider the following problem
\begin{equation}\label{AppProb2-interior}
\left\{
\begin{aligned}
&v_t-\di\, \overline{\mathbf{a}}_{B_4}(D v,t)=0 \quad &\text{in}& \quad Q_4,\\
&v=w \quad &\text{on}& \quad \partial_p Q_4.
\end{aligned}\right.
\end{equation}

We then obtain the following estimate.

\begin{lem}
	\label{lem2-inter}
	Let $v$ solve \eqref{AppProb2-interior}. Then for any $\epsilon>0$ there exists $C>0$ and $\sigma_1$ (which is independent of $\epsilon$) so that
	\begin{equation}
	\label{eq-lem2-inter}
	\fint_{Q_4}|D(w-v)|^pdxdt\leq \epsilon \fint_{Q_4}|Dw|^pdxdt +C[{\bf a}]_{2,R_0}^{\sigma_1}\Big(\fint_{Q_5}|Dw|^{p}  dxdt\Big)^{\f{\sigma(p+\epsilon_0)}{p}}.
	\end{equation}
\end{lem}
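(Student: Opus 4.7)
The plan is to compare the two equations by testing their weak formulations against $w-v$ (rendered admissible by a Steklov-average procedure, as in the discussion following (1.6)). Since $w$ and $v$ agree on $\partial_p Q_4$, subtracting and testing gives
\[
\int_{Q_4}\langle \mathbf{a}(Dw,x,t)-\overline{\mathbf{a}}_{B_4}(Dv,t),\, D(w-v)\rangle\,dxdt \;=\; 0.
\]
Splitting the integrand as
$[\overline{\mathbf{a}}_{B_4}(Dw,t)-\overline{\mathbf{a}}_{B_4}(Dv,t)]+[\mathbf{a}(Dw,x,t)-\overline{\mathbf{a}}_{B_4}(Dw,t)]$
moves the first, ``good'' difference to the left and the oscillation of $\mathbf{a}$ to the right. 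The averaged nonlinearity $\overline{\mathbf{a}}_{B_4}(\cdot,t)$ inherits the monotonicity \eqref{eq2-functiona} by averaging in $x$, which is the key structural input for the lower bound.

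For $p\ge 2$ the monotonicity yields $\int_{Q_4}|D(w-v)|^p$ directly on the left-hand side. For $\frac{2n}{n+2}<p<2$ it produces only $\int_{Q_4}(|Dw|+|Dv|)^{p-2}|D(w-v)|^2$, which I would convert into $\int_{Q_4}|D(w-v)|^p$ by the same Young-with-weight trick used in Lemma \ref{lem1-inter}: write
$|D(w-v)|^p=(|Dw|+|Dv|)^{-p(p-2)/2}\cdot(|Dw|+|Dv|)^{p(p-2)/2}|D(w-v)|^p$
and apply Young to get an $\varepsilon$-absorbable piece involving $|Dw|^p$ and $|Dv|^p$, plus $c(\varepsilon)\int(|Dw|+|Dv|)^{p-2}|D(w-v)|^2$. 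The term $\int|Dv|^p$ is then dominated by $\int|Dw|^p$ via an energy estimate for \eqref{AppProb2-interior}.

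The right-hand (error) term is controlled using (1.4) and the definition of $\Theta$ by
\[
\int_{Q_4}\Theta(\mathbf{a},B_4)(x,t)\,|Dw|^{p-1}|D(w-v)|\,dxdt.
\]
Young's inequality with conjugate exponents $(p,p')$ splits off $\varepsilon\int|D(w-v)|^p$ (absorbable) and leaves $C_\varepsilon\int\Theta^{p/(p-1)}|Dw|^p$. To this I would apply H\"older with exponents $\gamma=(p+\epsilon_0)/\epsilon_0$ and $\gamma'=(p+\epsilon_0)/p$:
\[
\fint_{Q_4}\Theta^{p/(p-1)}|Dw|^p \le \Bigl(\fint_{Q_4}\Theta^{\gamma p/(p-1)}\Bigr)^{1/\gamma}\Bigl(\fint_{Q_4}|Dw|^{p+\epsilon_0}\Bigr)^{p/(p+\epsilon_0)}.
\]
The first factor is bounded by $[\mathbf{a}]_{2,R_0}^{\sigma_1}$ via Remark \ref{rem1}(b) (self-improvement of the small-BMO seminorm), and the second by Proposition \ref{higherInte-Prop1-inter} applied with $H\equiv 0$, giving a power of $\fint_{Q_5}|Dw|^p$ of the claimed form.

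The main obstacle is arranging the exponents so that the three pieces fit together cleanly: the monotonicity bound (especially in the subquadratic case $p<2$), the $\gamma$-self-improvement of the BMO seminorm, and the Gehring-type higher integrability must all share the same scaling on $|Dw|$. Once these exponents are aligned, choosing $\varepsilon$ small enough to absorb the $\int|D(w-v)|^p$ contributions on the left produces \eqref{eq-lem2-inter}. The case $p<2$ is technically the most delicate, since the non-homogeneous lower bound from \eqref{eq2-functiona} forces an extra Young step that must be balanced against the absorption, exactly as in the proof of Lemma \ref{lem1-inter}.
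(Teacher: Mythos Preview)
Your proposal is correct and follows essentially the same route as the paper: test the difference of the two equations with $w-v$, use the inherited monotonicity of $\overline{\mathbf{a}}_{B_4}$ (with the Young-weight trick in the subquadratic case, exactly as in Lemma~\ref{lem1-inter}), bound the oscillation error by $\Theta(\mathbf{a},B_4)|Dw|^{p-1}|D(w-v)|$, then apply Young, H\"older with exponents $(p+\epsilon_0)/\epsilon_0$ and $(p+\epsilon_0)/p$, the BMO self-improvement, and Proposition~\ref{higherInte-Prop1-inter} with $H\equiv 0$. One cosmetic point: the self-improvement of $[\mathbf{a}]_{2,R_0}$ you invoke is the remark following the small-BMO definition, not Remark~\ref{rem1}(b) (which concerns Reifenberg domains).
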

\begin{proof}
	Arguing similarly to \eqref{eq1-proof prop1 inter} and \eqref{eq2-proof prop1 inter}, for $\tau>0$, we obtain that
	$$
	\fint_{Q_4}|D(w-v)|^pdxdt\leq \tau_1 \fint_{Q_4}|Dw|^pdxdt+C\fint_{Q_4} \langle \overline{{\bf a}}_{B_4}(Dw,t)-\overline{{\bf a}}_{B_4}(Dv,t), Dw-Dv \rangle dxdt.
	$$
	Taking $w-v$ as a test function, it can be verified that
	$$
	\begin{aligned}
	\fint_{Q_4} \langle \overline{{\bf a}}_{B_4}(Dw,t)-\overline{{\bf a}}_{B_4}(Dv,t), Dw-Dv \rangle dxdt&=\fint_{Q_4} \langle \overline{{\bf a}}_{B_4}(Dw,t)-{{\bf a}}_{B_4}(Dw,x,t), Dw-Dv \rangle dxdt.
	\end{aligned}
	$$
	These two estimates imply
	\begin{equation}\label{eq1-proof w-v}
	\begin{aligned}
	\fint_{Q_4}|D(w-v)|^pdxdt&\leq \tau_1 \fint_{Q_4}|Dw|^pdxdt+C\fint_{Q_4} \langle \overline{{\bf a}}_{B_4}(Dw,t)-{{\bf a}}_{B_4}(Dw,x,t), Dw-Dv \rangle dxdt\\
	&\leq \tau_1 \fint_{Q_4}|Dw|^pdxdt+C\fint_{Q_4} \Theta(\mathbf{a},B_4) |Dw|^{p-1} |D(w-v)|  dxdt.
	\end{aligned}
	\end{equation}
	Applying Young's inequality and Proposition \ref{higherInte-Prop1-inter}, we have
	\begin{equation}\label{eq2-proof w-v}
	\begin{aligned}
	\fint_{Q_4} &\Theta(\mathbf{a},B_4) |Dw|^{p-1} |D(w-v)|  dxdt\\
	&\leq \tau_2\fint_{Q_4}|D(w-v)|^p + C\fint_{Q_4} \Theta(\mathbf{a},B_4)^{\f{p}{p-1}} |Dw|^p  dxdt\\
	&\leq \tau_2\fint_{Q_4}|D(w-v)|^p + C\Big(\fint_{Q_4} \Theta(\mathbf{a},B_4)^{\f{p(p+\epsilon_0)}{(p-1)\epsilon_0}}dxdt\Big)^{\f{\epsilon_0}{p+\epsilon_0}} \Big(\fint_{Q_4}|Dw|^{p+\epsilon_0}  dxdt\Big)^{\f{p}{p+\epsilon_0}}\\
	&\leq \tau_2\fint_{Q_4}|D(w-v)|^p +[{\bf a}]_{2,R_0}^{\sigma_1}\Big(\fint_{Q_5}|Dw|^{p}  dxdt\Big)^{\f{\sigma(p+\epsilon_0)}{p}}.
	\end{aligned}
	\end{equation}
	From \eqref{eq1-proof w-v} and \eqref{eq2-proof w-v}, by taking $\tau_1$ and $\tau_2$ to be sufficiently small, we obtain the desired estimate.
\end{proof}

We now state the standard H\"older regularity result. See for example \cite[Chapter 8]{B}.
\begin{prop}\label{Lvc-Prop-inter}
	Let $v$ solve the equation \eqref{AppProb2-interior}. Then we have
	$$
	\|Dv\|_{L^\vc(Q_3)}\leq C\Big(\fint_{Q_4}|Dv|^pdxdt+1\Big)^{1/p}.
	$$
\end{prop}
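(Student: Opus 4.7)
The plan is to exploit the crucial structural feature of \eqref{AppProb2-interior}: the nonlinearity $\overline{\mathbf{a}}_{B_4}(\xi,t)$ is independent of the spatial variable $x$. Consequently the equation is invariant under $x$-translations, and finite difference quotients of $v$ in $x_k$ satisfy an analogous equation; after an energy estimate and passage to the limit, each partial derivative $D_k v$ formally satisfies the linearized parabolic equation
\begin{equation*}
(D_k v)_t - \di \bigl( D_\xi \overline{\mathbf{a}}_{B_4}(Dv,t)\, D(D_k v) \bigr) = 0.
\end{equation*}
The growth and monotonicity hypotheses \eqref{eq1-functiona}, \eqref{eq2-functiona} supply exactly the degenerate/singular ellipticity required for DiBenedetto's Lipschitz regularity machinery from \cite[Chapter 8]{B}.

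Having set up the differentiated equation, I would proceed in three moves. First, derive a Caccioppoli-type inequality for $|Dv|^2$ by testing the linearized equation with $\eta^2 D_k v$, where $\eta$ is an appropriate parabolic cutoff, and summing over $k$; in the subquadratic range $\f{2n}{n+2}<p<2$ one works with the auxiliary quantity $(\mu+|Dv|)^{p-2}|D^2 v|^2$ (with a small $\mu>0$ to be sent to zero at the end) to keep the ellipticity quantitatively controllable. Second, combine this Caccioppoli estimate with the parabolic Sobolev embedding to produce a reverse-H\"older inequality for $|Dv|^2$ on \emph{intrinsic} cylinders of the form $B_r(z_0) \times (t_0 - \lambda^{2-p} r^2,\, t_0 + \lambda^{2-p} r^2)$, where $\lambda$ is comparable to the $L^\infty$-norm one is trying to estimate. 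Third, iterate Moser's scheme on these intrinsic cylinders, followed by a bootstrap argument that either certifies $\lambda$ as an upper bound for $|Dv|$ on $Q_3$ or produces a smaller candidate; a standard alternative finally fixes $\lambda$ in terms of the $L^p$-average of $|Dv|$ over $Q_4$.

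The main obstacle is the \emph{intrinsic scaling}: because the equation is $p$-homogeneous rather than $2$-homogeneous, the natural parabolic cylinders must be calibrated to the magnitude of $\lambda \sim \|Dv\|_{L^\infty}$ itself, producing a circular dependence that has to be broken by a careful iteration. This is also where the additive constant $+1$ in the stated bound originates: the rescaling $\tilde v(x,t) = v(x, \lambda^{2-p} t)/\lambda$ reduces to a problem with unit gradient scale, and inverting the change of variables leaves behind a constant that absorbs the scaling deficit between $p$ and $2$. Since this circle of ideas is precisely the content of DiBenedetto's Lipschitz theorem for autonomous parabolic $p$-Laplacian type equations, I would ultimately present the argument as: verify that the rescaled equation falls within the hypotheses of \cite[Chapter 8]{B}, quote the conclusion, and rescale back to obtain the claimed estimate.
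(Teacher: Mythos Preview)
Your proposal is correct and lands exactly where the paper does: the paper gives no proof at all but simply cites this as a standard result from \cite[Chapter 8]{B}, and your sketch (difference quotients exploiting $x$-independence, Caccioppoli on intrinsic cylinders, Moser iteration with intrinsic scaling) is precisely the machinery developed there. Your outline is accurate and more informative than the bare citation, so there is nothing to correct.
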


We have the following approximation result.

\begin{prop}
	\label{appProp3-inter}
	For each $\epsilon>0$ there exists $\delta>0$ so that the following holds true. Assume that $u$ is a weak solution to the problem \eqref{ParabolicProblem} satisfying
	\begin{equation}
	\label{eq-u-prop3}
	\fint_{Q_5}|Du|^pdxdt\leq 1,
	\end{equation}
	under the condition 
	\begin{equation}
	\label{eq-F-prop3}
	\fint_{Q_5}|F|^pdxdt\leq \delta^p.
	\end{equation}
	Then there exists a weak solution $v$ to the problem \eqref{AppProb2-interior} satisfying
	\begin{equation}
	\label{eq-Lvc v-prop3}
	\|Dv\|_{L^\vc(Q_3)}\lesi 1,
	\end{equation}
	and
	\begin{equation}
	\label{eq-u-v-prop3}
	\fint_{Q_3}|D(u-v)|^pdxdt\leq \epsilon^p.
	\end{equation}
\end{prop}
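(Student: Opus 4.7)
The plan is to carry out a double comparison $u \to w \to v$, where $w$ solves the homogeneous equation \eqref{AppProb1-interior} on $Q_5$ with boundary data $u$, and $v$ solves the frozen-coefficient equation \eqref{AppProb2-interior} on $Q_4$ with boundary data $w$. Both intermediate solutions exist by standard theory. The quantities $\fint_{Q_5}|D(u-w)|^p$ and $\fint_{Q_4}|D(w-v)|^p$ are already controlled by Lemmas \ref{lem1-inter} and \ref{lem2-inter} respectively; the job is to tune the parameters so that each becomes less than $\epsilon^p/(C_0 2)$ for the right geometric constant $C_0$ coming from the triangle inequality and from the ratios $|Q_5|/|Q_3|$, $|Q_4|/|Q_3|$.

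First, I would apply Lemma \ref{lem1-inter} with $\epsilon$ replaced by a small parameter $\eta>0$ to be chosen. Using the normalizations \eqref{eq-u-prop3} and \eqref{eq-F-prop3}, this yields
\[
\fint_{Q_5}|D(u-w)|^p\,dxdt \leq \eta + C\delta^p.
\]
As an immediate consequence, $\fint_{Q_5}|Dw|^p\,dxdt \lesssim \fint_{Q_5}|Du|^p\,dxdt + \fint_{Q_5}|D(u-w)|^p\,dxdt \lesssim 1$, provided $\eta,\delta \leq 1$. This is the key bookkeeping step: once $w$ has bounded gradient on average, we can feed it into the second comparison.

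Next, I would apply Lemma \ref{lem2-inter} with a second small parameter $\eta'>0$ to bound $\fint_{Q_4}|D(w-v)|^p$ by $\eta' \fint_{Q_4}|Dw|^p + C[{\bf a}]_{2,R_0}^{\sigma_1}\bigl(\fint_{Q_5}|Dw|^p\bigr)^{\sigma(p+\epsilon_0)/p}$. Since $\fint_{Q_5}|Dw|^p \lesssim 1$ and $[{\bf a}]_{2,R_0} \leq \delta^2$ by \eqref{eq2-Assumption}, this gives
\[
\fint_{Q_4}|D(w-v)|^p\,dxdt \lesssim \eta' + \delta^{2\sigma_1}.
\]
Moreover, $\fint_{Q_4}|Dv|^p\,dxdt \lesssim \fint_{Q_4}|Dw|^p + \fint_{Q_4}|D(w-v)|^p \lesssim 1$, so Proposition \ref{Lvc-Prop-inter} applied to $v$ yields $\|Dv\|_{L^\infty(Q_3)}\lesssim 1$, establishing \eqref{eq-Lvc v-prop3}.

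Finally, by the triangle inequality and the uniform comparison of measures $|Q_5|\sim |Q_4|\sim|Q_3|$,
\[
\fint_{Q_3}|D(u-v)|^p\,dxdt \lesssim \fint_{Q_5}|D(u-w)|^p\,dxdt + \fint_{Q_4}|D(w-v)|^p\,dxdt \lesssim \eta + \eta' + \delta^p + \delta^{2\sigma_1}.
\]
Choosing first $\eta$ and $\eta'$ both equal to $\epsilon^p/(4C_0)$ in the application of Lemmas \ref{lem1-inter} and \ref{lem2-inter}, and then $\delta$ so small that $C(\delta^p+\delta^{2\sigma_1}) \leq \epsilon^p/2$, delivers \eqref{eq-u-v-prop3}. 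The main obstacle is primarily psychological rather than technical: one must freeze the parameters in the correct order, because the constant in Lemma \ref{lem2-inter} implicitly depends on the control of $\fint_{Q_5}|Dw|^p$ which is itself established only after Lemma \ref{lem1-inter} is invoked. A subtler point is ensuring that the exponent $\sigma(p+\epsilon_0)/p$ appearing in Lemma \ref{lem2-inter} does not spoil smallness; here we use that its base is already $\lesssim 1$ so that the factor $[{\bf a}]_{2,R_0}^{\sigma_1}$ carries the smallness on its own.
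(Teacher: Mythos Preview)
Your proposal is correct and follows essentially the same route as the paper: the double comparison $u\to w\to v$ via Lemmas~\ref{lem1-inter} and~\ref{lem2-inter}, followed by Proposition~\ref{Lvc-Prop-inter} for the $L^\infty$ bound on $Dv$. The only cosmetic difference is that the paper controls $\fint_{Q_4}|Dv|^p$ by comparing $v$ directly to $u$ rather than to $w$, but this is immaterial; your more explicit bookkeeping of the parameter choices $\eta,\eta',\delta$ is in fact clearer than the paper's terse version.
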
 
\begin{proof}
	The inequality \eqref{eq-u-v-prop3} follows immediately from Lemma \ref{lem1-inter} and \ref{lem2-inter} and the following estimate
	$$
 	 \fint_{Q_3}|D(u-v)|^pdxdt\lesi \fint_{Q_3}|D(u-w)|^pdxdt+\fint_{Q_3}|D(w-v)|^pdxdt.
	$$
	From Proposition \ref{Lvc-Prop-inter}, we have
	$$
	\|Dv\|^p_{L^\vc(Q_3)}\leq C\fint_{Q_4}|Dv|^p dxdt +C\leq C\fint_{Q_4}|Du|^p dxdt+C\fint_{Q_4}|D(u-v)|^p dxdt +C.
	$$
	This along with \eqref{eq-u-prop3} and \eqref{eq-u-v-prop3} yields \eqref{eq-Lvc v-prop3}.
	
\end{proof}
 \subsection{Boundary estimates}
We now consider the boundary case. Fix $t_0\in (0, T)$ and $z_0=(x_0,t_0)\in \Om_T$. We may assume that $x_0=0$. Without loss of generality we may assume that
\begin{equation}
\label{geometriccondition}
B_5^+\subset \Om_5\subset \Om_5\cap \{x: x_n>-12\delta\}.
\end{equation}
Without loss of geneality we may assume that $(t_0-5^2,t_0+5^2)\subset (0,T)$.

Similarly to Proposition  \ref{higherInte-Prop1-inter}, the higher integrability result still holds true near the boudary of the domain $\Om$. See for example \cite{P, P2, BP}.
\begin{prop}
	\label{higherInte-Prop1-boundary}
Let $u$ be a weak solution to the problem
\[
\left\{
\begin{aligned}
&u_t-\di\, \mathbf{a}(D u,x,t)=\di(|H|^{p-2}H) \quad &\text{\rm in}& \quad K_5(z_0),\\
&u=0 \quad &\text{on}& \quad \partial_wK_5(z_0).
\end{aligned}\right.
\]
with $H\in L^{p+\bar{\sigma}}(K_5(z_0))$ for some $\bar{\sigma}>0$. Then there exists $\epsilon_0\in (0,\bar{\sigma}]$ and $\sigma>0$ so that $|Du|\in L^{p+\epsilon_0}(K_4(z_0))$ and
\[
\fint_{K_4(z_0)}|Du|^{p+\epsilon_0}dxdt\lesi \Big(\fint_{K_5(z_0)}|Du|^pdxdt\Big)^\sigma + \fint_{K_5(z_0)}|H|^{p+\epsilon_0}dxdt+1. 
\]	
\end{prop}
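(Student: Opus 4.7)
The plan is to follow the classical Gehring-type self-improving machinery of Giaquinta-Modica, adapted to parabolic equations and Reifenberg-flat boundaries, in parallel with the interior case of Proposition \ref{higherInte-Prop1-inter}. The key point that makes the boundary version work with essentially the same proof is that the zero Dirichlet condition on $\partial_w K_5(z_0)$ allows us to extend $u$ by zero outside $\Om$ while keeping $u \in W^{1,p}$ locally; the Reifenberg flatness ensures this extension behaves regularly on parabolic cylinders that cross $\partial\Om$.

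First I would fix a cylinder $Q_{r,\tau}(z)$ with $0<r$ small and $z$ near $\partial_w K_5(z_0)$, and derive a parabolic Caccioppoli inequality on $K_{r,\tau}(z)=Q_{r,\tau}(z)\cap \Om_T$. Using a Steklov-averaged version of \eqref{eq-weak solution} with test function $\eta^p u$ where $\eta$ is a standard parabolic cutoff, combined with the structural bounds \eqref{eq1-functiona} and \eqref{eq2-functiona}, one obtains
\[
\sup_{t}\fint_{K_{r/2,\tau/4}(z)}|u|^2 dx + \fint_{K_{r/2,\tau/4}(z)}|Du|^p\,dxdt \lesi \fint_{K_{r,\tau}(z)}\left(\frac{|u|^p}{r^p} + |H|^p\right)dxdt.
\]
Here the boundary condition lets us take $k=0$ in the usual level-set truncation, so no subtraction of an average $\overline{u}$ is needed when the cylinder meets $\partial\Omega$; interior cylinders are treated as in Proposition \ref{higherInte-Prop1-inter}.

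Next I would invoke a parabolic Sobolev-Poincar\'e inequality on $K_{r,\tau}(z)$. For cylinders meeting the boundary the zero extension together with the density condition \eqref{eq1-Reifenberg domain} (a consequence of Reifenberg flatness) allows the Sobolev-Poincar\'e inequality on $B_r\cap\Om$ with a constant depending only on $n$ and $p$, provided $\delta<1/10$. Inserting this into the Caccioppoli inequality and choosing the intrinsic time scale $\tau \sim r^2$ (and dealing with the scaling deficit separately when $p\neq 2$ by splitting into the sub- and super-quadratic regimes as in Lemma \ref{lem1-inter}), one derives a reverse H\"older inequality of the form
\[
\fint_{Q_{r/2}(z)\cap \Om_T}|Du|^p\,dxdt \lesi \left(\fint_{Q_{r}(z)\cap \Om_T}|Du|^{p_*}\,dxdt\right)^{p/p_*} + \fint_{Q_{r}(z)\cap \Om_T}|H|^p\,dxdt + 1
\]
for some $p_*<p$ depending on $n,p$.

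Finally, I would apply the parabolic Gehring lemma of Giaquinta-Modica in the form proven in \cite{P, P2, BP} to upgrade the reverse H\"older inequality to higher integrability with some exponent $p+\epsilon_0>p$, yielding the claimed estimate on $K_4(z_0)$. The power $\sigma$ on the right-hand side arises from the intrinsic scaling deficit: when $p\neq 2$ the cylinders used in the reverse H\"older step are not self-similar under the natural parabolic dilation, and rescaling to unit cylinders produces the factor $\sigma$ exactly as in the interior case. The main obstacle is precisely this scaling deficit together with ensuring that all constants in the Caccioppoli and Sobolev-Poincar\'e inequalities are uniform in the Reifenberg parameter $\delta\in(0,1/10)$; once the density bound \eqref{eq1-Reifenberg domain} and the zero extension are in place this is routine but delicate book-keeping.
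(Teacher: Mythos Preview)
Your outline is essentially correct, and in fact it provides \emph{more} than the paper does: the paper does not prove Proposition~\ref{higherInte-Prop1-boundary} at all, but simply states it with the remark ``See for example \cite{P, P2, BP}.'' Your sketch (Caccioppoli near the boundary using the zero extension, Sobolev--Poincar\'e with constants controlled by the measure-density estimate \eqref{eq1-Reifenberg domain}, then the parabolic Gehring lemma) is precisely the strategy carried out in those references, so there is no discrepancy in approach.

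One small caution on the sketch itself: in the Caccioppoli step you wrote only the term $|u|^p/r^p$ on the right-hand side, but the parabolic problem also produces a term of the form $|u|^2/\tau$ from the time derivative, and balancing these two against each other is exactly where the intrinsic geometry (the choice of $\tau\sim \lambda^{2-p}r^2$) enters rather than merely ``$\tau\sim r^2$''. This is what ultimately forces the exponent $\sigma\neq 1$ on the right-hand side; your final paragraph acknowledges this, but the intermediate display should reflect it. None of this affects the validity of the overall plan.
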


Let $u$ be a weak solution to the problem \eqref{ParabolicProblem}. We consider the  unique weak solution 
$$
w\in C(t_0-5^2,t_0+5^2; L^2(\Om\cap B_5))\cap L^p(t_0-5^2,t_0+5^2; W^{1,p}_0(\Om\cap B_5))
$$
to the  following equation
\begin{equation}\label{AppProb1-boundary}
\left\{
\begin{aligned}
&w_t-\di\, \mathbf{a}(D w,x,t)=0 \quad &\text{in}& \quad K_5(z_0),\\
&w=u \quad &\text{on}& \quad \partial_pK_5(z_0).
\end{aligned}\right.
\end{equation}

Similarly to Lemma \ref{lem1-inter}, we can prove the following result.
\begin{lem}\label{lem1-boundary}
	Let $w$ be a weak solution to the problem \eqref{AppProb1-boundary}. Then for each $\epsilon>0$ there exists $C>0$ so that
	\begin{equation}
	\label{eq-lem1-boundary}
	\fint_{K_5(z_0)}|D(u-w)|^pdxdt\leq \epsilon\fint_{K_5(z_0)}|Du|^pdxdt+C\fint_{K_5(z_0)}|F|^pdxdt.
	\end{equation}
\end{lem}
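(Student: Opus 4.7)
The plan is to mirror the argument used for Lemma \ref{lem1-inter} line by line, with all integrals now taken over $K_5(z_0) = Q_5(z_0)\cap \Omega_T$ instead of $Q_5$. The essential observation that makes the interior argument go through boundary-wise is that $u$ and $w$ satisfy matching boundary data on the entire parabolic boundary $\partial_p K_5(z_0)$: on the lateral piece $\partial_w K_5(z_0)\subset \partial\Omega\times\mathbb R$ both $u$ and $w$ vanish (the former because it solves \eqref{ParabolicProblem} with zero boundary data on $\partial_p\Omega_T$, the latter by construction of \eqref{AppProb1-boundary}), while on the remaining part of $\partial_p K_5(z_0)$ we have $w=u$ by construction. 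Consequently $u-w\in L^p(t_0-25,t_0+25;W^{1,p}_0(\Omega\cap B_5))$, so after a standard Steklov-average regularization in the time variable it is an admissible test function for both \eqref{ParabolicProblem} and \eqref{AppProb1-boundary}.

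For the subquadratic case $\tfrac{2n}{n+2}<p<2$, I would repeat exactly the decomposition
$$|D(u-w)|^p = \bigl(|Du|+|Dw|\bigr)^{-\frac{p(p-2)}{2}}\bigl(|Du|+|Dw|\bigr)^{\frac{p(p-2)}{2}}|D(u-w)|^p,$$
apply Young's inequality with a small parameter $\tau$, and then invoke the ellipticity bound \eqref{eq2-functiona} to control $\fint_{K_5(z_0)}(|Du|+|Dw|)^{p-2}|D(u-w)|^2$ by $\fint_{K_5(z_0)}\langle \mathbf a(Du,x,t)-\mathbf a(Dw,x,t),Du-Dw\rangle$. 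Subtracting the weak formulations of \eqref{ParabolicProblem} and \eqref{AppProb1-boundary} against the test function $u-w$ (the Steklov-averaged version; the parabolic terms $\int (u-w)_t(u-w)$ are non-negative on account of the matching initial trace) gives the cancellation
$$\fint_{K_5(z_0)}\langle \mathbf a(Du,x,t)-\mathbf a(Dw,x,t),Du-Dw\rangle\,dxdt \le -\fint_{K_5(z_0)}\langle |F|^{p-2}F,Du-Dw\rangle\,dxdt,$$
and one more application of Young's inequality converts the right-hand side into $\tau\fint_{K_5(z_0)}|D(u-w)|^p + C\fint_{K_5(z_0)}|F|^p$. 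Absorbing all $\tau$-terms on the left by choosing $\tau$ small yields \eqref{eq-lem1-boundary}. For $p\ge 2$ the argument is shorter since \eqref{eq2-functiona} directly bounds $|D(u-w)|^p$ by the monotonicity pairing, and the same Steklov-test and Young chain concludes the proof.

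The only genuine subtlety, and thus the main thing to watch, is justifying the use of $u-w$ as a test function: because $K_5(z_0)$ is a general subdomain of $\Omega_T$ rather than a product cylinder inside the interior, one has to check that the Steklov-averaged function $[u-w]_h$ has the correct zero trace on $\partial_w K_5(z_0)$ and the correct initial value, and that the boundary integrals produced by integration by parts vanish; this is however a routine consequence of the Reifenberg flatness hypothesis together with the shared boundary data of $u$ and $w$, and follows the scheme detailed in \cite{B}. Once this is in place, no further modification of the interior argument is required.
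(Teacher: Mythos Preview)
Your proposal is correct and follows exactly the approach the paper indicates: the paper's own proof of Lemma~\ref{lem1-boundary} consists of the single sentence ``Similarly to Lemma~\ref{lem1-inter}, we can prove the following result,'' and your argument carries out precisely that transfer, with the appropriate attention to why $u-w$ is an admissible test function on $K_5(z_0)$.
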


Let $w$ be a weak solution to \eqref{AppProb1-boundary}. We now consider the following problem
\begin{equation}\label{AppProb2-boundary}
\left\{
\begin{aligned}
&h_t-\di\, \overline{\mathbf{a}}_{B_4}(D h,t)=0 \quad &\text{in}& \quad K_4(z_0),\\
&h=w \quad &\text{on}& \quad \partial_p K_4(z_0).
\end{aligned}\right.
\end{equation}
Using the argument as in the proof of Lemma \ref{lem2-inter} we obtain the following estimate.
\begin{lem}
	\label{lem2-boundary}
	Let $h$ solve \eqref{AppProb2-boundary}. Then for any $\epsilon>0$ there exists $C>0$ and $\sigma_1$ (which is independent to $\epsilon$) so that
	\begin{equation}
	\label{eq-lem2-boundary}
	\fint_{K_4(z_0)}|D(w-h)|^pdxdt\leq \epsilon \fint_{K_4(z_0)}|Dw|^pdxdt +C[{\bf a}]_{2,R_0}^{\sigma_1}\Big(\fint_{K_5(z_0)}|Dw|^{p}  dxdt\Big)^{\f{\sigma(p+\epsilon_0)}{p}}.
	\end{equation}
\end{lem}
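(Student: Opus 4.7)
The plan is to mirror, step by step, the interior proof of Lemma~\ref{lem2-inter} on the boundary cylinder $K_4(z_0)$, making only the minor adjustments forced by the lateral boundary. The three ingredients remain available: the structural monotonicity \eqref{eq2-functiona} and growth bound \eqref{eq1-functiona}, the fact that $w-h$ vanishes on $\partial_p K_4(z_0)$ and is therefore admissible as a test function modulo Steklov averages, and the higher integrability of $Dw$ provided by Proposition~\ref{higherInte-Prop1-boundary} in place of Proposition~\ref{higherInte-Prop1-inter}.

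First, I would perform the same Young-inequality manipulation as in \eqref{eq1-proof prop1 inter}--\eqref{eq2-proof prop1 inter} (treating $p\ge 2$ and $\tfrac{2n}{n+2}<p<2$ separately via the factor $(|Dw|+|Dh|)^{p-2}$) and invoke monotonicity of $\overline{\mathbf{a}}_{B_4}$ to reduce the task to controlling
\[
\fint_{K_4(z_0)}\bigl\langle \overline{\mathbf{a}}_{B_4}(Dw,t)-\overline{\mathbf{a}}_{B_4}(Dh,t),\,D(w-h)\bigr\rangle\,dxdt.
\]
Testing the difference of \eqref{AppProb1-boundary} and \eqref{AppProb2-boundary} against $w-h$, the parabolic term $\tfrac{1}{2}\partial_t|w-h|^2$ integrates to a nonnegative endpoint quantity that can be discarded, and the spatial term collapses to
\[
\fint_{K_4(z_0)}\bigl\langle \overline{\mathbf{a}}_{B_4}(Dw,t)-\mathbf{a}(Dw,x,t),\,D(w-h)\bigr\rangle\,dxdt,
\]
which is pointwise bounded by $\Theta(\mathbf{a},B_4)(x,t)\,|Dw|^{p-1}|D(w-h)|$ thanks to the definition of $\Theta$ and \eqref{eq1-functiona}.

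Next, I would apply Young's inequality with a small parameter $\tau_2$ to absorb $|D(w-h)|^p$ on the left, leaving $\fint_{K_4(z_0)}\Theta(\mathbf{a},B_4)^{p/(p-1)}|Dw|^p\,dxdt$. H\"older's inequality with exponents $\tfrac{p+\epsilon_0}{\epsilon_0}$ and $\tfrac{p+\epsilon_0}{p}$ splits this into an average of a power of $\Theta$, controlled by $[\mathbf{a}]_{2,R_0}^{\sigma_1}$ via the self-improving property of \eqref{eq2-Assumption} (after transferring the average from $K_4(z_0)$ to $Q_4(z_0)$ at the cost of a harmless constant through \eqref{eq1-Reifenberg domain}), times $\bigl(\fint_{K_4(z_0)}|Dw|^{p+\epsilon_0}\bigr)^{p/(p+\epsilon_0)}$. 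This last factor is then upgraded to $\bigl(\fint_{K_5(z_0)}|Dw|^p\bigr)^{\sigma(p+\epsilon_0)/p}$ by the boundary higher-integrability result Proposition~\ref{higherInte-Prop1-boundary}, with additive constants absorbed into $C$. Choosing $\tau_1,\tau_2$ small yields \eqref{eq-lem2-boundary}.

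The step I expect to be the main obstacle is justifying the testing procedure up to the Reifenberg boundary: one must verify that $w(\cdot,t)-h(\cdot,t)\in W^{1,p}_0(\Om\cap B_4)$ slicewise so that the Steklov-averaging machinery pushes through on the nonsmooth geometry \eqref{geometriccondition}, and one must keep track of the fact that the averaging ball in $\overline{\mathbf{a}}_{B_4}$ is the full ball rather than $B_4\cap\Om$. Once the test function is legitimized and the boundary higher integrability is invoked, the remaining computations are verbatim those of Lemma~\ref{lem2-inter}.
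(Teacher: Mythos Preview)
Your proposal is correct and follows precisely the route the paper intends: the paper offers no separate proof of Lemma~\ref{lem2-boundary}, stating only that one should repeat the argument of Lemma~\ref{lem2-inter}, and your outline does exactly this with the appropriate substitutions ($K_4(z_0),K_5(z_0)$ for $Q_4,Q_5$ and Proposition~\ref{higherInte-Prop1-boundary} for Proposition~\ref{higherInte-Prop1-inter}). Your additional remarks on legitimizing $w-h$ as a test function via Steklov averages on the Reifenberg geometry, and on passing the $\Theta$-average from $K_4(z_0)$ to $Q_4(z_0)$ through \eqref{eq1-Reifenberg domain}, are the only genuine boundary adjustments needed and are handled correctly.
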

However, the main trouble is that the $L^\vc$-norm of the weak solution $h$ may not be bounded near the boundary due to the lack of the smoothness of the domain $\Om$. To overcome this trouble, we now consider its reference problem
\begin{equation}\label{AppProb3-boundary}
\left\{
\begin{aligned}
&v_t-\di\, \overline{\mathbf{a}}_{B_4}(D v,t)=0 \quad &\text{in}& \quad Q_4^+(z_0),\\
&v=0 \quad &\text{on}& \quad Q_4(z_0)\cap \{z=(x',x_n,t): x_n=0\}.
\end{aligned}\right.
\end{equation}
\begin{defn}
	A weak solution $v$ to the problems \eqref{AppProb3-boundary} is understood in the following sense: the zero extension $\bar{v}$ of $v$ is in  $ C(t_0-4^2, t_0+4^2; L^2(B_4))\cap L^p(t_0-4^2, t_0+4^2; W^{1,p}_0(B_4))$ and satisfies the following
	\begin{equation}
	\label{eq-weak solution}
	\int_{Q^+_4(z_0)}h\varphi_t dxdt -\int_{Q^+_4(z_0)}\langle {\bf a}(Dh, x,t), D\varphi\rangle dxdt = 0,
	\end{equation} 
	for every test function $\varphi \in C^\vc_0(Q^+_4(z_0))$.
	\end{defn}

 \begin{prop}
 	\label{approxPropBoundary-1}
 	For every $\epsilon>0$, there exists $\delta$ such that the following holds. If $h$ is a  weak solution to the problem \eqref{AppProb2-boundary} along with \eqref{geometriccondition} and 
 	\begin{equation}
 	\label{eq h-ApproxPro1}
 	\fint_{K_4(z_0)}|D h|^p\lesi 1,
 	\end{equation}
 	then there exists $v$ solving the problem \eqref{AppProb3-boundary} with 
 	\begin{equation}
 	\label{eq1 v-ApproxPro1}
 	\fint_{Q_4^+(z_0)}|Dv|^p\lesi 1
 	\end{equation} 	
 	such that
 	\begin{equation}
 	\label{eq2 v-ApproxPro1}
 	\fint_{Q_4^+(z_0)}|h-v|^p\leq \epsilon^p.
 	\end{equation}
 \end{prop}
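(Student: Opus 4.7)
The plan is to establish this approximation proposition by a contradiction-and-compactness argument, exploiting that as $\delta\to 0$ the Reifenberg-flat domain $\Om\cap B_5$ degenerates to the half-ball $B_5^+$, together with the monotonicity built into the averaged nonlinearity $\overline{\mathbf{a}}_{B_4}(\cdot,t)$ via \eqref{eq2-functiona}.

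Suppose, for a contradiction, that the conclusion fails. Then there exist $\epsilon_0>0$, sequences $\delta_k\to 0$, $(\delta_k,6)$-Reifenberg flat domains $\Om_k$ with $B_5^+\subset(\Om_k)_5\subset B_5\cap\{x_n>-12\delta_k\}$, and weak solutions $h_k$ of the corresponding problem \eqref{AppProb2-boundary} with $\fint_{K_4^{(k)}(z_0)}|Dh_k|^p\lesi 1$, such that $\fint_{Q_4^+(z_0)}|h_k-v|^p>\epsilon_0^p$ for every admissible candidate $v$. The first step is compactness. Extending each $h_k$ by zero outside $\Om_k\cap B_4$ yields $\tilde h_k$ bounded uniformly in $L^p(t_0-16,t_0+16;W^{1,p}_0(B_4))$. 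Standard parabolic energy estimates (testing against $h_k$ via Steklov averages and using \eqref{eq2-functiona}) give a uniform bound in $L^\infty(L^2)$ and, via the equation, a uniform bound of $\partial_t\tilde h_k$ in a negative Sobolev space. An Aubin--Lions-type argument then produces a subsequence along which $\tilde h_k\to h_\infty$ strongly in $L^p(Q_4(z_0))$ and $D\tilde h_k\rightharpoonup Dh_\infty$ weakly in $L^p$.

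The second step pins down the boundary behaviour of the limit. Since $\tilde h_k$ is supported in $\{x_n>-12\delta_k\}$, the strong $L^p$ limit $h_\infty$ vanishes on $\{x_n\leq 0\}\cap Q_4(z_0)$. Combined with $\tilde h_k\in W^{1,p}_0(B_4)$ at a.e.\ time, this places the restriction $v:=h_\infty|_{Q_4^+(z_0)}$ in the correct class so that its zero extension lies in $L^p(W^{1,p}_0(B_4))$, which is precisely the required boundary condition in \eqref{AppProb3-boundary}. The bound \eqref{eq1 v-ApproxPro1} follows at once from weak lower semicontinuity of the $L^p$ norm of the gradient.

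The third and hardest step is to identify $v$ as a weak solution of the averaged reference problem. For $\varphi\in C_0^\vc(Q_4^+(z_0))$, admissibility of $\varphi$ against the equation for $h_k$ is guaranteed for large $k$ by $B_5^+\subset(\Om_k)_5$, giving
\[
\int \tilde h_k\,\varphi_t\,dx dt-\int\langle \overline{\mathbf{a}}_{B_4}(D\tilde h_k,t),D\varphi\rangle dx dt=0.
\]
The linear term passes to the limit via strong $L^p$ convergence of $\tilde h_k$. The growth bound \eqref{eq1-functiona} provides a weak limit $\chi$ of $\overline{\mathbf{a}}_{B_4}(D\tilde h_k,t)$ in $L^{p'}$, and a Minty--Browder argument using the monotonicity \eqref{eq2-functiona} (inherited by $\overline{\mathbf{a}}_{B_4}$ under $x$-averaging) identifies $\chi=\overline{\mathbf{a}}_{B_4}(Dh_\infty,t)$. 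The main technical obstacle is the required convergence $\int\langle\overline{\mathbf{a}}_{B_4}(D\tilde h_k,t),D\tilde h_k\rangle\to\int\langle\chi,Dh_\infty\rangle$ to activate Minty's trick: this is obtained from the energy identity for $h_k$ obtained by testing against $h_k$ via Steklov averages on a suitable time slab, together with a thin-shell cut-off near $\{x_n=0\}$ to absorb contributions from the mismatch between $\Om_k\cap B_4$ and $B_4^+$, whose smallness follows from the higher integrability of $Dh_k$ supplied by Proposition \ref{higherInte-Prop1-boundary} and the shrinking width $O(\delta_k)$ of the shell. Once $v$ is thus identified, strong $L^p$ convergence $\tilde h_k\to v$ contradicts $\fint_{Q_4^+(z_0)}|h_k-v|^p>\epsilon_0^p$, completing the argument.
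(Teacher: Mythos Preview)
Your proposal is correct and follows essentially the same route as the paper: a contradiction argument producing sequences of domains and solutions, Aubin--Lions compactness from the uniform $L^p(W^{1,p})$ bound on $Dh_k$ and the dual bound on $\partial_t h_k$ coming from the equation, and then a Browder--Minty identification of the weak limit as a solution of \eqref{AppProb3-boundary}. The paper is terser, citing \cite{BW2} for the Minty step, whereas you spell out the energy-identity and thin-shell mechanism (invoking Proposition~\ref{higherInte-Prop1-boundary}) needed to pass to the limit in the nonlinear term; this elaboration is sound and does not constitute a different strategy.
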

 \begin{proof}
 We first note that if $h$ is a weak solution to \eqref{AppProb2-boundary}, then it also solves
 \begin{equation}\label{AppProb2s-boundary}
 \left\{
 \begin{aligned}
 &h_t-\di\, \overline{\mathbf{a}}_{B_4}(D h,t)=0 \quad &\text{in}& \quad K_4(z_0),\\
 &h=0 \quad &\text{on}& \quad \partial_w K_4(z_0).
 \end{aligned}\right.
 \end{equation}
 
 We will argue by contradiction as in \cite{BW1, BR1}. Assume, to the contrary, that there exist an $\epsilon>0$, a sequence of domains $\{\Omega_k\}$ such that
 \begin{equation}\label{geometricconditionOmegak}
 B_5^+\subset \Omega^k_5\subset \{x\in B_5: x_n>-\f{12}{k}\},
 \end{equation}
 and a  sequence of functions $\{h^k\}$ which solves the problem 
 \begin{equation}\label{AppProb2sk-boundary}
 \left\{
 \begin{aligned}
 &h^k_t-\di\, \overline{\mathbf{a}}_{B_4}(D h^k,t)=0 \quad &\text{in}& \quad K^k_{4}(z_0):=(\Om^k\cap B_4)\times (t_0-4^2, t_0+4^2)\\
 &h^k=0 \quad &\text{on}& \quad \partial_w K^k_{4}(z_0).
 \end{aligned}\right.
 \end{equation}
satisfying
 \begin{equation}
 \label{eq h-ApproxPro1 Ok}
 \fint_{K^k_4(z_0)}|D h^k|^p\lesi 1.
 \end{equation}
 But, we have
 \begin{equation}
 \label{eq2 v-ApproxPro1 Ok}
 \fint_{Q_4^+(z_0)}|h^k-v|^p> \epsilon,
 \end{equation}
 where $v$ is any weak solution to the problem \eqref{AppProb3-boundary} with
 \begin{equation}
 \label{eq1 vk-ApproxPro1}
 \fint_{B_4^+}|Dv|^p\lesi 1.
 \end{equation} 	
 From \eqref{geometricconditionOmegak}, \eqref{eq h-ApproxPro1 Ok} and Poincar\'e inequality, we have
 $$
 \fint_{Q_4^+(z_0)}|Dh^k|^pdxdt \leq \fint_{K^k_4(z_0)}|Dh^k|^pdxdt\leq \fint_{K^k_4(z_0)}|Dh^k|^pdxdt\lesi 1, 
 $$
 and
 $$
 \begin{aligned}
 \|h^k_t\|_{L^{p'}(t_0-4^2, t_0+4^2; W^{-1,p'}(B_4^+))}&=\|\di\, \overline{\mathbf{a}}_{B_4}(D h^k,t)\|_{L^{p'}(t_0-4^2, t_0+4^2; W^{-1,p'}(B_4^+))}\\
 	&\leq \|\overline{\mathbf{a}}_{B_4}(D h^k,t)\|_{L^{p'}(t_0-4^2, t_0+4^2; L^{p'}(B_4^+))}\\
 	&\leq \|(Dh^k)^{p-1}\|_{L^{p'}(t_0-4^2, t_0+4^2; L^{p'}(B_4^+))}\\
 	&\lesi  \Big(\int_{K^k_4(z_0)}|D h^k|^p\Big)^{\f{p-1}{p}}\lesi 1.
 \end{aligned}		
 $$
 Therefore, by Aubin-Lions Lemma in \cite[Chapter 3]{Sh}, there exists $h^0$ with $h^0\in L^p(t_0-4^2,t_0+4^2; W^{1,p}(B_4^+))$ and  $h^0_t\in L^{p'}(t_0-4^2,t_0+4^2; W^{-1,p'}(B_4^+))$ such that there exists a subsequence of $\{h^k\}$, which is still denoted by $\{h^k\}$, satisfying
 $$
 h^k\to h^0, \ \ \text{strongly in $L^p(t_0-4^2,t_0+4^2; L^p(B_4^+))$},
 $$
 $$
 Dh^k\to Dh^0, \ \ \text{weakly in $L^p(t_0-4^2,t_0+4^2; L^p(B_4^+))$},
 $$
and
 $$
 h_t^k\to h_t^0, \ \ \text{weakly in  $L^{p'}(t_0-4^2,t_0+4^2; W^{-1,p'}(B_4^+))$}.
 $$
 As a direct consequence, we have
 $$
 \int_{Q_4^+(z_0)}|Dh^0|^pdxdt\lesi \liminf_{k} \int_{Q_4^+(z_0)}|Dh^k|^pdxdt\lesi 1.
 $$
 At this stage, using the method of Browder-Minty as in \cite{BW2}, we can verify that $h^0$ solves
 $$
 \left\{
 \begin{aligned}
 &h^0_t-\di \,\overline{\mathbf{a}}_{B_4}(D h^0,t)=0 \quad &\text{in}& \quad Q_4^+(z_0),\\
 &h^0=0 \quad &\text{on}& \quad Q_4\cap \{x: x_n=0\}\times (t_0-4^2, t_0+4^2).
 \end{aligned}\right.
 $$
  This contradicts to \eqref{eq2 v-ApproxPro1 Ok} by taking $v=h_0$ and $k$ sufficiently large.
\end{proof}	

\begin{prop}
	\label{approxPropBoundary-2}
	For every $\epsilon>0$, there exists $\delta$ such that the following holds. If $h$ is a  weak solution to the problem \eqref{AppProb2s-boundary} along with \eqref{geometriccondition} and 
	\begin{equation}
	\label{eq h-ApproxPro2}
	\fint_{K_4(z_0)}|D h|^p\lesi 1,
	\end{equation}
	then there exists $v$ solving the problem \eqref{AppProb2-boundary} with 
	\begin{equation}
	\label{eq1 v-ApproxPro2}
	\|Dv\|_{L^\vc(Q_3^+(z_0))}^p\lesi 1
	\end{equation} 	
	such that
	\begin{equation}
	\label{eq2 v-ApproxPro2}
	\fint_{K_3(z_0)}|D(h-\bar{v})|^p\leq \epsilon^p,
	\end{equation}
	where $\bar{v}$ is a zero extension of $v$ to $Q_4$.
\end{prop}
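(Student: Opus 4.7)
The plan is to combine Proposition \ref{approxPropBoundary-1} with standard boundary Lipschitz regularity on the flat half-space, then upgrade the resulting $L^p$-closeness of functions to $L^p$-closeness of gradients via a Caccioppoli argument on $Q_3^+(z_0)$.

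First, for a small parameter $\tau>0$ to be chosen at the end, Proposition \ref{approxPropBoundary-1} yields (after possibly shrinking $\delta$) a weak solution $v$ of the half-cylinder reference problem \eqref{AppProb3-boundary} with $\fint_{Q_4^+(z_0)}|Dv|^p\lesi 1$ and $\fint_{Q_4^+(z_0)}|h-v|^p\leq \tau^p$. Since $\overline{\mathbf{a}}_{B_4}(\xi,t)$ depends only on $(\xi,t)$ and inherits \eqref{eq1-functiona}--\eqref{eq2-functiona}, and since $v$ has zero Dirichlet data on the flat hyperplane $\{x_n=0\}\cap B_4$, classical boundary Lipschitz estimates for parabolic $p$-Laplacian type equations on a flat half-space deliver $\|Dv\|_{L^\infty(Q_{7/2}^+(z_0))}^p\lesi 1$, which is \eqref{eq1 v-ApproxPro2}.

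Now let $\bar v$ be the zero extension of $v$ to $Q_4(z_0)$ and decompose $K_3(z_0)=Q_3^+(z_0)\cup \mathcal{S}$, where by \eqref{geometriccondition} the set $\mathcal{S}:=K_3(z_0)\setminus Q_3^+(z_0)$ lies in the strip $(B_3\cap\{-12\delta<x_n\leq 0\})\times(t_0-9,t_0+9)$, so $|\mathcal{S}|\lesi \delta$. On $\mathcal{S}$ one has $D\bar v=0$, and Hölder together with Proposition \ref{higherInte-Prop1-boundary} gives
\[
\int_{\mathcal{S}}|D(h-\bar v)|^p\,dxdt=\int_{\mathcal{S}}|Dh|^p\,dxdt\lesi |\mathcal{S}|^{\epsilon_0/(p+\epsilon_0)}\|Dh\|_{L^{p+\epsilon_0}(K_4(z_0))}^p\lesi \delta^{\epsilon_0/(p+\epsilon_0)}.
\]
On $Q_3^+(z_0)$, both $h$ and $v$ solve the same frozen-coefficient equation, so $h-v$ satisfies a uniformly monotone parabolic equation on $Q_4^+(z_0)$. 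Fix a cutoff $\eta\in C_c^\infty(Q_{7/2}(z_0))$ with $\eta\equiv 1$ on $Q_3(z_0)$ and $|D\eta|,|\eta_t|\lesi 1$. Testing the difference equation against $\eta^p(h-v)$ modulo Steklov averages, integrating by parts in space, and invoking the monotonicity \eqref{eq2-functiona} for $\overline{\mathbf{a}}_{B_4}$, one arrives at a Caccioppoli inequality
\[
\int_{Q_{7/2}^+(z_0)}\eta^p V(D(h-v))\,dxdt\lesi \mathrm{I}+\mathrm{II}+\mathrm{III},
\]
where $V(\xi)=|\xi|^p$ for $p\geq 2$ and $V(\xi)=(|Dh|+|Dv|)^{p-2}|\xi|^2$ for $\frac{2n}{n+2}<p<2$; here $\mathrm{I}=\int\eta^{p-1}|\eta_t|(h-v)^2$ and $\mathrm{II}=\int\eta^{p-1}|D\eta|(|Dh|^{p-1}+|Dv|^{p-1})|h-v|$ are the standard time and spatial-cutoff remainders, while
\[
\mathrm{III}=\int_{\{x_n=0\}\cap B_{7/2}}\eta^p h\,\bigl(\overline{\mathbf{a}}_{B_4}(Dh,t)-\overline{\mathbf{a}}_{B_4}(Dv,t)\bigr)\cdot e_n\,dx'dt
\]
is the boundary-flux remainder produced because $v$ (but not $h$) vanishes on $\{x_n=0\}$. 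The terms $\mathrm{I}$ and $\mathrm{II}$ are absorbed via Hölder together with $\fint_{Q_4^+(z_0)}|h-v|^p\leq \tau^p$, the a priori bound $\fint_{K_4(z_0)}|Dh|^p\lesi 1$ and its higher-integrability improvement, and $\|Dv\|_{L^\infty}\lesi 1$. For $\mathrm{III}$ the key point is that the zero extension $\tilde h$ of $h$ outside $\Omega$ lies in $W^{1,p}(B_4)$ with $D\tilde h=Dh\chi_\Omega$, so the fundamental theorem of calculus along the $x_n$-direction (using that $\partial\Omega\cap B_{7/2}\subset\{-12\delta<x_n\leq 0\}$ by \eqref{geometriccondition}) combined with Hölder and the higher integrability of $Dh$ on the thin Reifenberg strip yields $\mathrm{III}\lesi \delta^{\gamma}$ for some $\gamma=\gamma(n,p,\epsilon_0)>0$. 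For $p<2$ one recovers $|D(h-v)|^p$ from $V$ by the Hölder splitting used in the proof of Lemma \ref{lem1-inter}. Choosing first $\tau$ small and then $\delta$ small enough produces \eqref{eq2 v-ApproxPro2}.

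The main obstacle is the boundary term $\mathrm{III}$: it encodes the geometric mismatch between the zero set of $h$ (the rough Reifenberg boundary $\partial\Omega$) and the zero set of $v$ (the flat hyperplane $\{x_n=0\}$). Controlling it requires transferring vanishing across the $\delta$-thin Reifenberg strip, which is effected by a one-dimensional fundamental-theorem-of-calculus argument combined with the higher integrability of $Dh$ supplied by Proposition \ref{higherInte-Prop1-boundary}. Everything else reduces to a routine application of monotonicity, Hölder, and the Poincaré inequality.
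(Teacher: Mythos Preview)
Your overall strategy---invoke Proposition \ref{approxPropBoundary-1}, use flat-boundary Lipschitz regularity for $v$, then run a Caccioppoli argument for the difference---is the paper's strategy as well. The gap is in the boundary term $\mathrm{III}$ as you have written it: it contains $\overline{\mathbf{a}}_{B_4}(Dh,t)\cdot e_n$ on the hyperplane $\{x_n=0\}$, and this trace is not available. You only know $Dh\in L^{p+\epsilon_0}(K_4(z_0))$, which does not give a trace of $Dh$ on an $(n{-}1)$-dimensional set; moreover, by \eqref{geometriccondition} the set $\{x_n=0\}\cap B_4$ can meet $\partial\Omega$, so interior Lipschitz regularity of $h$ is unavailable there too. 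The term appears because you test ``the difference equation on $Q_4^+(z_0)$'' with $\eta^p(h-v)$, which does not vanish on $\{x_n=0\}$; making sense of that step forces an integration by parts of the $h$-flux across $\{x_n=0\}$, and that is precisely the illegitimate move.

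The paper avoids ever producing a surface integral involving $Dh$. It observes that the zero extension $\bar v$ solves, distributionally on all of $Q_4(z_0)$,
\[
\bar v_t-\di\,\overline{\mathbf{a}}_{B_4}(D\bar v,t)=D_{x_n}\!\bigl[\overline{\mathbf{a}}^{\,n}_{B_4}(Dv(x',0,t))\,\chi_{\{x_n<0\}}\bigr],
\]
a divergence-form forcing whose density is bounded (by the Lipschitz bound on $v$ up to $\{x_n=0\}$) and supported on the thin strip $\{-12\delta<x_n<0\}$. A standard Caccioppoli argument for $h-\bar v$ on $K_4(z_0)$ then yields
\[
\fint_{K_3(z_0)}|D(h-\bar v)|^p\lesi \fint_{K_4(z_0)}|h-\bar v|^p+\fint_{K_4(z_0)}|h-\bar v|^2+\fint_{K_4(z_0)}\bigl|Dv(x',0,t)\chi_{\{x_n<0\}}\bigr|^p,
\]
and the last term is $O(\delta)$ with no trace of $Dh$ required. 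Your argument can be repaired by adopting this device; equivalently, test the $h$-equation on $K_4(z_0)$ and the $v$-equation on $Q_4^+(z_0)$ \emph{separately}, so that only the trace of $Dv$---not of $Dh$---appears in the boundary remainder.
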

\begin{proof}
	Let $\bar{v}$ be a zero extension of $v$ to $Q_4(z_0)$. Then it can be verified that $\bar{v}$ solves 
	$$
	\bar{v}_t-\di\, \overline{\mathbf{a}}_{B_4}(D \bar{v},t)=D_{x_n}\left[\overline{\mathbf{a}}^n_{B_4}(D\bar{v}(x',0,t))\chi_{\{x:x_n<0\}}\right] \ \ \text{in $Q_4(z_0)$},
	$$
	where $x=(x',x_n)$ and $\mathbf{a}=(\mathbf{a}^1,\ldots,\mathbf{a}^n)$.
	
	Therefore, $h-\bar{v}$ solve
	$$
	(h-\bar{v})_t-\di\, \overline{\mathbf{a}}_{B_4}(D (h-\bar{v}),t)= -D_{x_n}\left[\overline{\mathbf{a}}^n_{B_4}(D\bar{v}(x',0,t))\chi_{\{x:x_n<0\}}\right]\quad \text{in} \quad K_4(z_0).
	$$
	By a standard argument and \eqref{eq1-functiona}, we can show that
	$$
	\begin{aligned}
	\fint_{K_3(z_0)}&|D(h-\bar{v})|^pdxdt\\
	&\leq C\fint_{K_4(z_0)}|h-\bar{v}|^p dxdt+C\fint_{K_4(z_0)}|h-\bar{v}|^2 dxdt+C\fint_{K_4(z_0)}|D\bar{v}(x',0,t)\chi_{\{x:x_n<0\}}|^pdxdt.
	\end{aligned}
	$$ 
	Using a similar argument in \cite[pp.4304-4305]{BR1} we obtain that 
	\[
	\fint_{K_4(z_0)}|h-\bar{v}|^2 dxdt\lesi \mathcal{O}(\epsilon).
	\]
	Using \eqref{geometriccondition} and \eqref{eq2 v-ApproxPro1}, we discover that 
	$$
	\fint_{K_4(z_0)}|h-\bar{v}|^pdxdt\leq C\fint_{Q^+_4(z_0)}|h-\bar{v}|^pdxdt+\fint_{K_4(z_0)\backslash Q^+_4(z_0)}|h|^pdxdt\leq C(\epsilon_1 +O(\delta)).
	$$
	By \eqref{geometriccondition}, we have
	$$
	\begin{aligned}
	\fint_{K_4(z_0)}|D\bar{v}(x',0,t)\chi_{\{x:x_n<0\}}|^pdxdt&\leq \fint_{K_4(z_0)\cap \{x: -12\delta<x_n\leq 0\}\times (t_0-4^2,t_0)}|D\bar{v}(x',0,t)|^pdxdt\\
	&\leq O(\delta).
	\end{aligned}
	$$
	These three estimates imply \eqref{eq2 v-ApproxPro2}.
	
	The assertion \eqref{eq1 v-ApproxPro2} follows immediately from \eqref{eq1 v-ApproxPro1} and the H\"older estimate of $v$ near  the flat boundary in \cite{L}:
	$$
	\|v\|^p_{L^\vc(Q_3^+(z_0))}\leq C\fint_{Q_4^+(z_0)}|Dv|^p\leq C.
	$$
	This completes our proof.
\end{proof}
    	
From estimates above, we have the following corollary.
\begin{cor}\label{cor1}
For each $\epsilon>0$ there exists $\delta>0$ so that the following holds true. Assume that $u$ is a weak solution to the problem \eqref{ParabolicProblem} satisfying
\begin{equation}
\label{eq-u-cor}
\fint_{K_5(z_0)}|Du|^pdxdt\leq 1,
\end{equation}
under the condition 
\begin{equation}
\label{eq-F-cor}
\fint_{K_5(z_0)}|F|^pdxdt\leq \delta^p.
\end{equation}
Then there exists a weak weak solution $v$ to the problem \eqref{AppProb2-boundary} with 
\begin{equation}
\label{eq1 v-cor}
\|Dv\|_{L^\vc(Q_3^+(z_0))}^p\lesi 1
\end{equation} 	
such that
\begin{equation}
\label{eq2 v-cor}
\fint_{K_3(z_0)}|D(u-\bar{v})|^p\leq \epsilon^p,
\end{equation}
where $\bar{v}$ is a zero extension of $v$ to $Q_4$.
\end{cor}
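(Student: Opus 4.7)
The plan is to chain the three approximation steps already established for the boundary case, following the same philosophy as the interior Proposition~\ref{appProp3-inter}. Starting from $u$, I would successively pass to $w$ solving \eqref{AppProb1-boundary}, then to $h$ solving \eqref{AppProb2-boundary}, and finally to $v$ provided by Proposition~\ref{approxPropBoundary-2}, and then combine the error estimates by the triangle inequality on $K_3(z_0)$.

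First, apply Lemma~\ref{lem1-boundary} with a small parameter $\epsilon_1$ (to be chosen later) and the hypotheses \eqref{eq-u-cor}, \eqref{eq-F-cor}: this gives
\[
\fint_{K_5(z_0)}|D(u-w)|^p\,dxdt \leq \epsilon_1 + C\delta^p.
\]
In particular $\fint_{K_5(z_0)}|Dw|^p \lesi 1$ provided $\delta$ and $\epsilon_1$ are suitably small. Next, feed $w$ into Lemma~\ref{lem2-boundary}. Since the small BMO hypothesis \eqref{eq2-Assumption} gives $[{\bf a}]_{2,R_0}\leq \delta$, one obtains
\[
\fint_{K_4(z_0)}|D(w-h)|^p\,dxdt \leq \epsilon_2 + C\delta^{\sigma_1},
\]
and again an energy estimate yields $\fint_{K_4(z_0)}|Dh|^p \lesi 1$. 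At this point the hypothesis \eqref{eq h-ApproxPro2} of Proposition~\ref{approxPropBoundary-2} is met, so there exists $v$ solving the flat-boundary reference problem with $\|Dv\|_{L^\vc(Q_3^+(z_0))}^p\lesi 1$ and
\[
\fint_{K_3(z_0)}|D(h-\bar{v})|^p\,dxdt \leq \epsilon_3^p.
\]

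To finish, I would use the elementary inequality $|D(u-\bar v)|^p \lesi |D(u-w)|^p+|D(w-h)|^p+|D(h-\bar v)|^p$, integrate over $K_3(z_0)$, and compare the averages on $K_3$ with those on $K_4$ and $K_5$ using Remark~\ref{rem1}(c) (Reifenberg flatness makes $|K_r(z_0)|\sim |Q_r(z_0)|$). This produces
\[
\fint_{K_3(z_0)}|D(u-\bar v)|^p\,dxdt \lesi \epsilon_1+\epsilon_2+\epsilon_3^p+\delta^{\min(p,\sigma_1)},
\]
and choosing $\epsilon_1,\epsilon_2,\epsilon_3,\delta$ small in terms of $\epsilon$ delivers \eqref{eq2 v-cor}; the bound \eqref{eq1 v-cor} is inherited directly from Proposition~\ref{approxPropBoundary-2}.

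The main obstacle I anticipate is bookkeeping: Lemma~\ref{lem2-boundary} contains the factor $\bigl(\fint_{K_5(z_0)}|Dw|^p\bigr)^{\sigma(p+\epsilon_0)/p}$, so one must first establish the uniform a priori bound $\fint_{K_5(z_0)}|Dw|^p\lesi 1$ before that lemma can be applied with useful small constants. This in turn relies on the first approximation step being quantitatively under control before the second is invoked. Once these parameters are chosen in the correct order (first $\epsilon_3$, then $\epsilon_2$, then $\epsilon_1$, and finally $\delta$), the rest is a routine assembly of the three boundary estimates.
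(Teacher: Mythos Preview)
Your proposal is correct and follows essentially the same route as the paper: chain Lemma~\ref{lem1-boundary}, Lemma~\ref{lem2-boundary}, and Proposition~\ref{approxPropBoundary-2}, then combine the three errors on $K_3(z_0)$ via the triangle inequality. The paper's proof is terser about the parameter bookkeeping you spell out, but the structure and the ingredients are identical.
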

\begin{proof}
	Let $w$ and $h$ be, respectively, weak solutions to the problems \eqref{AppProb1-boundary} and \eqref{AppProb2-boundary}. From \eqref{eq-lem1-boundary}, \eqref{eq-lem2-boundary}, \eqref{eq-u-cor} and \eqref{eq-F-cor}, we obtain that 
	$$
	\fint_{K_4(z_0)}|D h|^p\lesi 1.
	$$
	Hence, by Proposition \eqref{approxPropBoundary-2}, we get \eqref{eq1 v-cor}. The estimate \eqref{eq2 v-cor} follows immediately from \eqref{eq-lem1-boundary}, \eqref{eq-lem2-boundary}, \eqref{eq2 v-ApproxPro2} 
	and the following estimate
	$$
	\fint_{K_3(z_0)}|D(u-\bar{v})|^p\lesi \fint_{K_3(z_0)}|D(u-w)|^p+\fint_{K_3}|D(w-h)|^p+\fint_{K_3(z_0)}|D(h-\bar{v})|^p.
	$$    
\end{proof}
    	
\section{The main result}

This section is devoted to prove Theorem \ref{mainthm1}.

Let $F\in L^{q,r}(\Om_T)$ with $q>p, 0<r\leq \vc$ and let $u$ be a unique weak solution to the equation \eqref{ParabolicProblem}.

Fix $1\leq s_1<s_2\leq 2$ and $R<\min\{R_0, 1\}$. Fix $z_0\in \Om_T$.
 
Without loss of generality, we may assume that $\eta=p+\epsilon_0<q$ where $\epsilon_0$ is a constant in Propositions \ref{higherInte-Prop1-inter} and \eqref{higherInte-Prop1-boundary}. Since $F\in L^{q,r}(\Om_T)\subset  L^\eta(\Om_T)\subset L^p(\Om_T)$, from \eqref{L^p-boundedness} we have $|Du|\in L^p(\Om_T)$. We set
\begin{equation*}
\lambda_0:=\Big(\fint_{K_{2R}(z_0)}|Du|^{p}dxdt\Big)^{\f{d}{p}}+\Big(\f{1}{\delta^{\eta}}\fint_{K_{2R}(z_0)}(|F|^{\eta}+1)dxdt\Big)^{\f{d}{\eta}}<\vc.
\end{equation*}
For $\lambda>0$, we now define the level set
$$
E_{s_1}(\lambda)=\{z\in K_{s_1R}(z_0): |Du(z)|>\lambda\}.
$$
For $r>0$, $\lambda>1$ and $z\in K_{s_1R}(z_0)$ we define 
$$
Q_r^\lambda(z)=\begin{cases}
Q_{r,\lambda^{2-p}r^2}(z),  \ \ \ \ &p\geq 2\\
Q_{\lambda^{\f{p-2}{2}}r,r^2}(z),  \ \ \ \ & \f{2n}{n+2}<p< 2,
\end{cases}
$$
and $K_{r}^\lambda(z)=Q_{r}^\lambda(z)\cap \Om_T$.

For $z\in E_{s_1}(\lambda)$, we now define $$
G_z(r)=\Big(\fint_{K_{r}^\lambda(z)}|Du|^{p}dxdt\Big)^{\f{1}{p}}+\Big(\f{1}{\delta^{\eta}}\fint_{K_{r}^\lambda(z)}|F|^{\eta}dxdt\Big)^{\f{1}{\eta}}.
$$
By Lebesgue's differential theorem, we have
\begin{equation}\label{eq Gz0}
\lim_{r\to 0}G_z(r)=|Du(z)|+\f{1}{\delta}|F(z)|>\lambda.
\end{equation}
Then for $\f{(s_2-s_1)R}{100}<r\leq (s_2-s_1)R$ and $\lambda>1$ we have
\begin{equation*}
\begin{aligned}
G_z(r)&=\Big(\fint_{K_{r}^\lambda(z)}|Du|^{p}dxdt\Big)^{\f{1}{p}}+\Big(\f{1}{\delta^{\eta}}\fint_{K_{r}^\lambda(z)}|F|^{\eta}dxdt\Big)^{\f{1}{\eta}}\\
&\leq \left[\f{|K_{2R}(z_0)|}{|K_{r}^\lambda(z)|}\right]^{\f{1}{p}}\Big(\fint_{K_{2R}(z_0)}|Du|^{p}dxdt\Big)^{\f{1}{p}}+\left[\f{|K_{2R}(z_0)|}{|K_{r}^\lambda(z)|}\right]^{\f{1}{\eta}}\Big(\f{1}{\delta^{\eta}}\fint_{K_{2R}(z_0)}|F|^{\eta}dxdt\Big)^{\f{1}{\eta}}\\	
&\leq \left[\f{|K_{2R}(z_0)|}{|K_{r}^\lambda(z)|}\right]^{\f{1}{p}}\lambda_0^{\f{1}{d}}\\
&\leq \left[\f{(2R)^{n+2}}{|K_{r}^\lambda(z)|}\right]^{\f{1}{p}}\lambda_0^{\f{1}{d}}\\
\end{aligned}
\end{equation*}
We note that for $0<r<(s_2-s_1)R$ and $z\in E_{s_1}(\lambda)$, we have $K_r^\lambda(z)\subset K_{2R}(z_0)$.

If $p\geq 2$,  from \eqref{eq1-Reifenberg domain} we have
\begin{equation}
\label{eq1-Gz}
\begin{aligned}
(G_z(r))^p&\leq \f{4^n (2R)^{n+2}}{r^{n+2}\lambda^{{2-p}}} \lambda_0^{\f{p}{d}}\leq 4^n\Big(\f{2R}{r}\Big)^{n+2}\lambda^{p-2}\lambda_0^{\f{p}{d}}\leq 4^n\Big(\f{2\times 10^6}{s_2-s_1}\Big)^{n+2}\lambda^{p-2}\lambda_0^{\f{p}{d}}.
\end{aligned}
\end{equation}

If $\f{2n}{n+2}<p<2$,  similarly we have
\begin{equation}
\label{eq2-Gz}
\begin{aligned}
(G_z(r))^p&\leq 4^n\Big(\f{2\times 10^6}{s_2-s_1}\Big)^{n+2}\lambda^{-\f{(p-2)n}{2}}\lambda_0^{\f{p}{d}}.
\end{aligned}
\end{equation}
We now fix 
$$
\lambda>\Big[4^n\Big(\f{2\times 10^6}{s_2-s_1}\Big)^{n+2}\Big]^{d/p}\lambda_0=\tilde{C}_0 \lambda_0.
$$
Then from \eqref{eq1-Gz} and \eqref{eq2-Gz}, by a simple calculation we obtain
$$
G_z(r)<\lambda, \ \ \ \text{for all $r\in [10^{-6}(s_2-s_1)R,(s_2-s_1)R]$}.
$$
This together with \eqref{eq Gz0} implies that for each $z\in E(\lambda, K_R)$ there exists $0<r_z<10^{-6}(s_2-s_1)R$ so that 
$$
G_z(r_z)=\lambda, \ \ \ \text{and $G_z(r)<\lambda$ for all $r\in (r_z, (s_2-s_1)R)$}.
$$
We now apply Vitali's covering lemma to obtain the following result directly.

\begin{lem}
	\label{coveringlemma}
	There exists a disjoint family $\{K_{r_i}^\lambda(z_i)\}_{i=1}^\vc$ with  $r_i<10^{-6}(s_2-s_1)R$ and $z_i=(x_i,t_i)\in \Om_T$ such that:
\begin{enumerate}[{\rm (a)}]
	\item $E_{s_1}(\lambda)\subset \bigcup_{i}K_{5r_i}^\lambda(z_i)$;
	\item $G_{z_i}(r_i)=\lambda$,  and $G_{z_i}(r)<\lambda$ for all $r\in (r_i, (s_2-s_1)R)$.
\end{enumerate}
\end{lem}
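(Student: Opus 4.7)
The plan is to produce, for each $z\in E_{s_1}(\lambda)$, a stopping-time radius $r_z$ at which $G_z$ equals $\lambda$ for the last time below $10^{-6}(s_2-s_1)R$, and then to apply Vitali's $5r$-covering lemma to the family of intrinsic cylinders $\{K_{r_z}^\lambda(z)\}$. Virtually all of the analytic content needed for the first step is already assembled in the paragraphs preceding the lemma: continuity of $G_z$ in $r$, the Lebesgue-differentiation limit $\lim_{r\to 0^+}G_z(r)>\lambda$ (from \eqref{eq Gz0}), and the strict inequality $G_z(r)<\lambda$ on $[10^{-6}(s_2-s_1)R,(s_2-s_1)R]$ (from \eqref{eq1-Gz} and \eqref{eq2-Gz} together with the choice $\lambda>\tilde{C}_0\lambda_0$).

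First I would fix $z\in E_{s_1}(\lambda)$ and observe that $r\mapsto G_z(r)$ is continuous on $(0,(s_2-s_1)R]$, since the parabolic measure $|K_r^\lambda(z)|$ and the integrals of $|Du|^p$ and $|F|^\eta$ over $K_r^\lambda(z)$ all depend continuously on $r$ by absolute continuity of the Lebesgue integral. Combining this continuity with the three facts just recalled, the function $G_z-\lambda$ is positive near $r=0^+$ and strictly negative on the upper range, so I would set
\[
r_z:=\sup\bigl\{r\in(0,10^{-6}(s_2-s_1)R):G_z(r)=\lambda\bigr\}.
\]
The intermediate value theorem guarantees that this set is non-empty, while continuity and the maximality of $r_z$ force $G_z(r_z)=\lambda$ together with $G_z(r)<\lambda$ for every $r\in(r_z,(s_2-s_1)R)$; this is exactly assertion (b) for each $z_i$ eventually selected.

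For the second step, note that with $\lambda$ held fixed, all the cylinders $Q_r^\lambda(z)$ are balls of radius $r$ centred at $z$ in the anisotropic parabolic metric
\[
d_\lambda\bigl((x,t),(y,s)\bigr):=\max\!\Bigl\{|x-y|,\;\lambda^{(p-2)/2}|t-s|^{1/2}\Bigr\}\qquad(p\ge 2),
\]
and analogously (with the roles of the two factors exchanged) for $\frac{2n}{n+2}<p<2$. Their radii are uniformly bounded by $10^{-6}(s_2-s_1)R$, so the classical Vitali $5r$-covering lemma applies verbatim in $(\mathbb{R}^n\times\mathbb{R},d_\lambda)$ and extracts a countable pairwise-disjoint subfamily $\{Q_{r_i}^\lambda(z_i)\}_{i\ge 1}$ with
$\bigcup_{z\in E_{s_1}(\lambda)}Q_{r_z}^\lambda(z)\subset\bigcup_i Q_{5r_i}^\lambda(z_i)$. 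Intersecting with $\Om_T$ converts $Q$ into $K$; since every $z\in E_{s_1}(\lambda)$ lies in its own $K_{r_z}^\lambda(z)$, this yields (a).

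The only mildly delicate point I anticipate is verifying that Vitali's lemma really applies to these anisotropic parabolic cylinders, but with $\lambda$ held fixed this reduces to working in a single translation-invariant metric on $\mathbb{R}^n\times\mathbb{R}$, so no new idea is required; the rest is a routine combination of Lebesgue differentiation, continuity of integrals, and the standard $5r$-argument.
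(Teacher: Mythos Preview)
Your proposal is correct and matches the paper's approach: the paper assembles exactly the ingredients you list (the Lebesgue-point lower bound, the strict upper bound on the large-radius range, and the resulting stopping radius $r_z$) in the discussion preceding the lemma, and then simply states that the lemma follows by applying Vitali's covering lemma directly. Your write-up supplies the details the paper omits---the continuity of $G_z$, the sup-definition of $r_z$, and the observation that for fixed $\lambda$ the intrinsic cylinders are metric balls so the $5r$-lemma applies---but the underlying argument is the same.
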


\begin{prop}
	\label{prop1}
	For each $i$ we have
\begin{equation}\label{eq1-wKilambda}
	\begin{aligned}
	|K_{r_i}^\lambda(z_i)|
	&\lesi |K_{r_i}^\lambda(z_i)\cap E_{s_2}(\lambda/4)|+\f{c}{(\delta\lambda)^\eta}\int_{\delta \lambda}^\vc t^{\eta}|\{z\in K_{r_i}^\lambda(z_i): |F(z)|>t\}|\f{dt}{t}.
	\end{aligned}
	\end{equation}
\end{prop}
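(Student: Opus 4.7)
The plan is a rescaling plus approximation plus energy argument.

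By the stopping-time construction in Lemma~\ref{coveringlemma}, one has $\fint_{K_{5r_i}^\lambda(z_i)}|Du|^p\le\lambda^p$ and $\fint_{K_{5r_i}^\lambda(z_i)}|F|^\eta\le(\delta\lambda)^\eta$, hence $\fint|F|^p\le(\delta\lambda)^p$ by Jensen (using $\eta>p$). Rescaling $K_{5r_i}^\lambda(z_i)\to Q_5$ via $\tilde u(y,s)=u(x_i+r_iy,t_i+\lambda^{2-p}r_i^2s)/(\lambda r_i)$ and $\tilde F=F/\lambda$ for $p\ge2$ (with the analogous intrinsic scaling when $\f{2n}{n+2}<p<2$), $\tilde u$ solves a parabolic equation of the same form on $Q_5$, still verifying \eqref{eq1-functiona}--\eqref{eq2-Assumption}, with $\fint_{Q_5}|D\tilde u|^p\le1$ and $\fint_{Q_5}|\tilde F|^p\le\delta^p$. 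For $\delta$ small, Proposition~\ref{appProp3-inter} (interior case $B_{5r_i}(x_i)\subset\Omega$) or Corollary~\ref{cor1} (boundary case, after a Reifenberg flattening of $\partial\Omega$ near $x_i$) produces a comparison $v$ on $K_{3r_i}^\lambda(z_i)$ with $|Dv|\le N\lambda$ and $\fint_{K_{3r_i}^\lambda(z_i)}|D(u-v)|^p\le\epsilon^p\lambda^p$ for any preassigned $\epsilon>0$.

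The energy estimate proceeds from $|Du|^p\le2^{p-1}((N\lambda)^p+|D(u-v)|^p)$ on $\{|Du|>\lambda/4\}$, which together with the split $\int_{K_{r_i}^\lambda}|Du|^p\le\int_{\{|Du|>\lambda/4\}}|Du|^p+(\lambda/4)^p|K_{r_i}^\lambda|$ upper bounds $\fint_{K_{r_i}^\lambda}|Du|^p$ by $CN^p\lambda^p|K_{r_i}^\lambda\cap E_{s_2}(\lambda/4)|/|K_{r_i}^\lambda|+C\epsilon^p\lambda^p+(\lambda/4)^p$. On the other hand, $G_{z_i}(r_i)=\lambda$ combined with $(a+b)^p\le2^{p-1}(a^p+b^p)$ applied to $a=(\fint|Du|^p)^{1/p}$, $b=(\fint|F|^\eta/\delta^\eta)^{1/\eta}$ gives the matching lower bound $\fint|Du|^p\ge\lambda^p/2^{p-1}-(\fint|F|^\eta)^{p/\eta}/\delta^p$. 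Using $2^{1-p}>4^{-p}$ to secure a positive leading coefficient, choosing $\epsilon$ small enough to absorb the $\epsilon^p$ term, and multiplying through by $|K_{r_i}^\lambda|$ produces the intermediate estimate
\[
|K_{r_i}^\lambda|\lesi|K_{r_i}^\lambda\cap E_{s_2}(\lambda/4)|+\f{|K_{r_i}^\lambda|^{1-p/\eta}}{(\delta\lambda)^p}\Big(\int_{K_{r_i}^\lambda}|F|^\eta\Big)^{p/\eta}.
\]

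The remaining term is recast as the $F$-tail displayed in the statement by truncating $|F|$ at a level $c\delta\lambda$ with $c<1$. By the sub-additivity $(\alpha+\beta)^{p/\eta}\le\alpha^{p/\eta}+\beta^{p/\eta}$ (valid since $p/\eta<1$), the inner piece $\int_{|F|\le c\delta\lambda}|F|^\eta\le(c\delta\lambda)^\eta|K_{r_i}^\lambda|$ contributes at most $c^p|K_{r_i}^\lambda|$ after the weighting $|K_{r_i}^\lambda|^{1-p/\eta}/(\delta\lambda)^p$, and is absorbed into the left-hand side for $c$ small; the outer piece is then rewritten via the layer-cake identity
\[
\int_{|F|>\delta\lambda}|F|^\eta=(\delta\lambda)^\eta|\{|F|>\delta\lambda\}\cap K_{r_i}^\lambda|+\eta\int_{\delta\lambda}^\vc t^{\eta-1}|\{|F|>t\}\cap K_{r_i}^\lambda|\,dt,
\]
the first summand again absorbed, the second producing, after division by $(\delta\lambda)^\eta$, the displayed $F$-tail. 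The main obstacle is precisely this last truncation step: when $\fint|F|^\eta$ is comparable to $(\delta\lambda)^\eta$, the lower bound on $\fint|Du|^p$ degenerates and the inequality must be closed essentially by the $F$-tail; this is where the hypothesis $\eta>p$ plays an essential role, both through the Jensen step that made the approximation applicable and through the sub-additivity exploited above.
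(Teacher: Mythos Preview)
Your route differs from the paper's and, as written, has a gap in the last step.

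\textbf{The gap.} After your intermediate estimate you carry the $F$-term in the form
\[
\frac{|K_{r_i}^\lambda|^{1-p/\eta}}{(\delta\lambda)^p}\Big(\int_{K_{r_i}^\lambda}|F|^\eta\Big)^{p/\eta}.
\]
You truncate at level $c\delta\lambda$, absorb the inner piece, and then apply the layer-cake identity to the outer piece. But the layer-cake integral is still sitting \emph{inside} the exponent $p/\eta<1$ and is still multiplied by the factor $|K_{r_i}^\lambda|^{1-p/\eta}$. Your sentence ``the second producing, after division by $(\delta\lambda)^\eta$, the displayed $F$-tail'' is not justified: what you actually have is
\[
|K_{r_i}^\lambda|^{1-p/\eta}\Big(\frac{1}{(\delta\lambda)^\eta}\int_{c\delta\lambda}^\infty t^{\eta-1}|\{|F|>t\}\cap K_{r_i}^\lambda|\,dt\Big)^{p/\eta},
\]
which is not the linear tail in the statement. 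This can be repaired by a further Young inequality $A^{1-\theta}B^\theta\le \varepsilon A + C(\varepsilon)B$ with $\theta=p/\eta$, $A=|K_{r_i}^\lambda|$, absorbing the $\varepsilon A$ into the left side; but you need to say so, since this is precisely the step that linearizes the tail and removes the stray $|K_{r_i}^\lambda|^{1-p/\eta}$.

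\textbf{Comparison with the paper.} The paper's argument is shorter and avoids the approximation machinery entirely at this stage. From $G_{z_i}(r_i)=\lambda$ it takes the dichotomy: either $\fint_{K_{r_i}^\lambda}|F|^\eta\ge(\delta\lambda/2)^\eta$, in which case the layer-cake formula applied directly to $\int|F|^\eta$ (no $p/\eta$ power) yields the tail immediately; or $\fint_{K_{r_i}^\lambda}|Du|^p\ge(\lambda/2)^p$, in which case H\"older plus the scaled higher-integrability estimate (Propositions~\ref{higherInte-Prop1-inter} and~\ref{higherInte-Prop1-boundary}) give $\big(\fint_{K_{r_i}^\lambda}|Du|^{p(1+\epsilon_0)}\big)^{1/(1+\epsilon_0)}\lesssim\lambda^p$ and hence $|K_{r_i}^\lambda|\lesssim|K_{r_i}^\lambda\cap E_{s_2}(\lambda/4)|$ with no $F$-term at all. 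Your approach instead imports the comparison functions from Proposition~\ref{appProp3-inter}/Corollary~\ref{cor1}, which is exactly what the paper uses \emph{later} (Lemma~\ref{lem2-mainproof} and Proposition~\ref{prop2}) to control $|E_{s_1}(N_0\lambda)|$. So you are effectively merging two steps; this is not wrong in spirit, but it forces you through the $(\,\cdot\,)^{p/\eta}$ detour that the paper's dichotomy avoids.
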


\begin{proof}
	From Lemma \ref{coveringlemma} we have either
	\[
	\fint_{K_{r_i}^\lambda(z_i)}|Du|^{p}dxdt\geq \f{\lambda^p}{2^p}, \ \ \text{or}  \ \ \ \ \f{1}{\delta^{\eta}}\fint_{K_{r_i}^\lambda(z_i)}|F|^{\eta}dxdt\geq \f{\lambda^\eta}{2^\eta}
	\]
	
	\noindent\textbf{Case 1.} If 
	$$
	\f{1}{\delta^{\eta}}\fint_{K_{r_i}^\lambda(z_i)}|F|^{\eta}dxdt\geq \f{\lambda^\eta}{2^\eta},
	$$
	then we have
	$$
	\begin{aligned}
	|K_{r_i}^\lambda(z_i)|&\leq \f{\eta 2^\eta}{\delta^\eta\lambda^\eta}\int_0^\vc t^{\eta}|\{z\in K_{r_i}^\lambda(z_i): |F(z)|>t\}|\f{dt}{t}\\
	&\leq \int_{0}^{\delta \lambda/4}\ldots +\int_{\delta \lambda/4}^\vc\ldots \\
	&\leq \f{|K_{r_i}^\lambda(z_i)|}{\delta^\eta 2^\eta}+\f{\eta 2^\eta}{\delta^\eta\lambda^\eta}\int_{\delta \lambda/4}^\vc t^{\eta}|\{z\in K_{r_i}^\lambda(z_i): |F(z)|>t\}|\f{dt}{t}.
	\end{aligned}
	$$
	This implies \eqref{eq1-wKilambda}.

	\noindent\textbf{Case 2.} If 
	\[
	\fint_{K_{r_i}^\lambda(z_i)}|Du|^{p}dxdt\geq \f{\lambda^p}{2^p},
	\]
	then
	$$
	|K_{r_i}^\lambda(z_i)|\leq \f{2^p}{\lambda^p} \int_{K_{r_i}^\lambda(z_i)}|Du|^{p}dxdt,
	$$
	then due to $K_{r_i}^\lambda(z_i)\subset K_{s_2R}(z_0)$, we have
	$$
	\begin{aligned}
	|K_{r_i}^\lambda(z_i)|	&\leq \f{2^p}{\lambda^p}\int_{K_{r_i}^\lambda(z_i)\backslash E_{s_2}(\lambda/4)}|Du|^{p}dxdt+\f{2^p}{\lambda^p}\int_{K_{r_i}^\lambda(z_i)\cap E_{s_2}(\lambda/4)}|Du|^{p}dxdt\\
	& \leq \f{|K_{r_i}^\lambda(z_i)|}{4^p}+ \f{1}{\lambda^p}\int_{K_{r_i}^\lambda(z_i)\cap E_{s_2}(\lambda/4)}|Du|^{p}dxdt.
	\end{aligned}
	$$
	This implies 
	\begin{equation}\label{eq-Kwi}
	|K_{r_i}^\lambda(z_i)|\lesi \f{1}{\lambda^p}\int_{K_{r_i}^\lambda(z_i)\cap E_{s_2}(\lambda/4)}|Du|^{p}dxdt.
	\end{equation}
		
	By Holder's inequality, we have
	$$
	\begin{aligned}
	\Big(&\int_{K_{r_i}^\lambda(z_i)\cap E_{s_2}(\lambda/4)}|Du|^{p}dxdt\Big)\\
	&\leq \Big(\fint_{K_{r_i}^\lambda(z_i)\cap E_{s_2}(\lambda/4)}|Du|^{p(1+\epsilon_0)}dxdt\Big)^{\f{1}{1+\epsilon_0}}|K_{r_i}^\lambda(z_i)|
	\Big(\f{|K_{r_i}^\lambda(z_i)\cap E_{s_2}(\lambda/4)|}{|K_{r_i}^\lambda(z_i)|}\Big)^{1-\f{1}{1+\epsilon_0}},
	\end{aligned}
	$$
	where $\epsilon_0$ is a constant in Propositions \ref{higherInte-Prop1-inter} and \ref{higherInte-Prop1-boundary}.

	By Propositions \ref{higherInte-Prop1-inter} and \ref{higherInte-Prop1-boundary}, the scaled mappings \eqref{Scaledmap1}, \eqref{Scaledmap2}, \eqref{Scaledmap3} and Lemma \ref{coveringlemma}, we obtain
	$$
	\begin{aligned}
	\Big(\fint_{K_{r_i}^\lambda(z_i)\cap E_{s_2}(\lambda/4)}|Du|^{p(1+\epsilon_0)}dxdt\Big)^{\f{1}{1+\epsilon_0}}&\leq 	\Big(\fint_{K_{r_i}^\lambda(z_i)}|Du|^{p(1+\epsilon_0)}dxdt\Big)^{\f{1}{1+\epsilon_0}}\\
	&\lesi \lambda^p.
	\end{aligned}
		$$
		Inserting these two estimates into  \eqref{eq-Kwi} we get that
		$$
		|K_{r_i}^\lambda(z_i)|\lesi |K_{r_i}^\lambda(z_i)|
		\Big(\f{|K_{r_i}^\lambda(z_i)\cap E_{s_2}(\lambda/4)|}{|K_{r_i}^\lambda(z_i)|}\Big)^{1-\f{1}{1+\epsilon_0}}.
		$$
		This implies
		$$
		|K_{r_i}^\lambda(z_i)|\lesi |K_{r_i}^\lambda(z_i)\cap E_{s_2}(\lambda/4)|.
		$$
		This completes our proof.
\end{proof}

\begin{lem}\label{lem2-mainproof}
For each $\epsilon>0$, there exist $\delta$ and $A_1$  so that the following holds true.  For each $i\in \mathcal{I}$, there exists $v_i$ defined in $K_{6r_i}^\lambda(z_i)$ satisfying
	
	\begin{equation}
	\label{eq1-proof ENlambda}
	\|D\bar{v}_i\|_{L^\vc(K_{6r_i}^\lambda(z_i))}\leq A_1\lambda^p,
	\end{equation}
	and
	\begin{equation}
	\label{eq2-proof ENlambda}
	\fint_{K_{6r_i}^\lambda(z_i)} |D(u-\bar{v}_i)|^pdxdt \leq \epsilon^p\lambda^p.
	\end{equation}

\end{lem}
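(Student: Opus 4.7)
I plan to renormalize around each $z_i$ via a $p$-intrinsic parabolic rescaling so that $K^\lambda_{10r_i}(z_i)$ is mapped to the canonical parabolic cylinder $K_5$, apply either Proposition \ref{appProp3-inter} (interior) or Corollary \ref{cor1} (boundary) to the normalized solution, and finally zoom back.

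\textbf{Step 1: Rescaling.} Applying Lemma \ref{coveringlemma}(b) at radius $10r_i<(s_2-s_1)R$ yields
\[
\fint_{K^\lambda_{10r_i}(z_i)}|Du|^p\,dxdt\leq \lambda^p,\qquad \fint_{K^\lambda_{10r_i}(z_i)}|F|^\eta\,dxdt\leq (\delta\lambda)^\eta,
\]
and hence $\fint_{K^\lambda_{10r_i}(z_i)}|F|^p\lesi (\delta\lambda)^p$ by H\"older. Writing $z_i=(x_i,t_i)$, set
\[
\rho:=\begin{cases}2r_i,&p\geq 2,\\ 2\lambda^{(p-2)/2}r_i,&\tfrac{2n}{n+2}<p<2,\end{cases}\qquad \tau:=\lambda^{2-p}\rho^2,
\]
and
\[
\tilde u(y,s):=\f{u(x_i+\rho y,\,t_i+\tau s)}{\rho\lambda},\ \ \tilde F(y,s):=\f{F(x_i+\rho y,\,t_i+\tau s)}{\lambda},\ \ \tilde{\mathbf{a}}(\xi,y,s):=\f{\mathbf{a}(\lambda\xi,\,x_i+\rho y,\,t_i+\tau s)}{\lambda^{p-1}}.
\]
A direct change of variables shows that $\tilde u$ weakly solves a parabolic equation of the same form driven by $\tilde F$, with $\tilde{\mathbf{a}}$ satisfying \eqref{eq1-functiona}--\eqref{eq2-functiona} with the same constants $\Lambda_1,\Lambda_2$ and with small BMO semi-norm still controlled by $\delta^2$; in the subquadratic case $p<2$ this last step requires a short covering in time of the pulled-back cylinders by standard ones $Q_{r,r^2}$ in order to match the template of the BMO condition. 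The choice $\tau=\lambda^{2-p}\rho^2$ is exactly the one balancing parabolic scaling against the $p$-deficit, and turns $K^\lambda_{10r_i}(z_i)$ into $K_5$ and $K^\lambda_{6r_i}(z_i)$ into $K_3$ in both ranges of $p$, while the Reifenberg flatness of $\Omega$ transfers with the same $\delta$ at the rescaled scale. Combining with Step~1,
\[
\fint_{K_5}|D\tilde u|^p\leq 1,\qquad \fint_{K_5}|\tilde F|^p\lesi \delta^p.
\]

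\textbf{Step 2: Approximation and pullback.} If $Q^\lambda_{10r_i}(z_i)\subset\Omega_T$, I apply Proposition \ref{appProp3-inter} to $\tilde u$ to obtain $\tilde v$ on $Q_4$ with $\|D\tilde v\|_{L^\vc(Q_3)}\lesi 1$ and $\fint_{Q_3}|D(\tilde u-\tilde v)|^p\leq \epsilon^p$, provided $\delta$ is chosen small enough. Otherwise there exists $\bar x\in\partial\Omega$ with $|\bar x-x_i|\lesi r_i$; I pick a Reifenberg chart at $\bar x$ via Remark \ref{rem1}(b) and translate so that the rescaled geometry realizes \eqref{geometriccondition}, then apply Corollary \ref{cor1} to obtain $\tilde v$ on $K_4^+$ whose zero extension $\bar{\tilde v}$ satisfies $\|D\bar{\tilde v}\|_{L^\vc(Q_3^+)}\lesi 1$ and $\fint_{K_3}|D(\tilde u-\bar{\tilde v})|^p\leq \epsilon^p$. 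In either case, setting $v_i(x,t):=\rho\lambda\,\tilde v((x-x_i)/\rho,(t-t_i)/\tau)$ and pulling back gives
\[
\|Dv_i\|_{L^\vc(K^\lambda_{6r_i}(z_i))}\lesi \lambda,\qquad \fint_{K^\lambda_{6r_i}(z_i)}|D(u-v_i)|^p\,dxdt\leq \epsilon^p\lambda^p.
\]
Since $\lambda>1$ by the choice above Lemma \ref{coveringlemma}, the first bound is $\leq A_1\lambda^p$, establishing \eqref{eq1-proof ENlambda}--\eqref{eq2-proof ENlambda}.

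\textbf{Main obstacle.} The trickiest point is the boundary case: one must combine a Reifenberg chart at a $\partial\Omega$-point near $x_i$ with the $p$-intrinsic parabolic zoom, verify that the Reifenberg constant $\delta$ survives the composition of the translation and the $p$-dependent rescaling, and ensure the pulled-back conclusions cover all of $K^\lambda_{6r_i}(z_i)$ rather than just its intersection with the half-space introduced by the chart. A secondary fiddly issue is matching the BMO scales in the subquadratic regime $p<2$, where the rescaled time window exceeds the space radius squared; this is handled by averaging over a covering of the extended cylinder by standard parabolic ones, preserving the bound by $\delta^2$.
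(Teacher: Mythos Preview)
Your proposal is correct and follows essentially the same approach as the paper: intrinsic $p$-parabolic rescaling to the unit scale, application of Proposition~\ref{appProp3-inter} in the interior case or Corollary~\ref{cor1} in the boundary case, and rescaling back via the inverse map. The paper makes slightly more explicit the geometric step in the boundary case---after rescaling it translates to the Reifenberg chart origin and enlarges from radius $5$ to $15$, using the inclusions $K_5(z_i)\subset K_{15}(0,t_i)\subset K_{20}(z_i)$ and $K_3(z_i)\subset K_9(0,t_i)$ to ensure the conclusion of Corollary~\ref{cor1} covers the desired set---which you correctly flag as the main obstacle but leave somewhat implicit in Step~2.
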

\begin{proof}
	For $r>0$, $\lambda>1$ and $x\in \Om$ we define 
	$$
	B_r^\lambda(x)=\begin{cases}
	B_{r}(x),  \ \ \ \ &p\geq 2\\
	B_{\lambda^{\f{p-2}{2}}r}(z),  \ \ \ \ & \f{2n}{n+2}<p< 2.
	\end{cases}
	$$
	
	For each $i$, We now consider two cases $B_{10r_i}^\lambda(z_i)\cap \partial\Om\neq \emptyset$ and $B_{10r_i}^\lambda(z_i)\subset \Om$. 
	
	We now consider	the first case $B_{10r_i}^\lambda(z_i)\cap \partial\Om\neq \emptyset$. We now consider the following the standard scaled mappings
	\begin{equation}\label{Scaledmap1}
	\left\{
	\begin{aligned}
	&\bar{u}_i(x,t)= \f{u(x_i+2r_i(x-x_i),t_i+\lambda^{2-p}(2r_i)^2(t-t_i))}{2r_i\lambda},\\
	&\bar{F}_i(x,t)= \f{F(x_i+2r_i(x-x_i),t_i+\lambda^{2-p}(2r_i)^2(t-t_i))}{\lambda},\\
	&\bar{{\bf a}}_i(\xi,x,t)= \f{{\bf a}(\lambda \xi,x_i+2r_i(x-x_i),t_i+\lambda^{2-p}(2r_i)^2(t-t_i))}{\lambda^{p-1}},
	\end{aligned}\right.
	\end{equation}
	\[
	\tilde{\Om}=\Big\{x_i+\f{x-x_i}{2r_i}:x\in \Om\Big\},
	\]
	as $p\geq 2$, and
	\begin{equation}\label{Scaledmap2}
	\left\{
	\begin{aligned}
	&\bar{u}_i(x,t)= \f{u(x_i+2\lambda^{\f{p-2}{2}}r_i(x-x_i),t_i+(2r_i)^2(t-t_i))}{2r_i\lambda^{p/2}},\\
	&\bar{F}_i(x,t)= \f{F(x_i+2\lambda^{\f{p-2}{2}}r_i(x-x_i),t_i+(2r_i)^2(t-t_i))}{\lambda},\\
	&\bar{{\bf a}}_i(\xi,x,t)= \f{{\bf a}(\lambda \xi,x_i+2\lambda^{\f{p-2}{2}}r_i(x-x_i),t_i+(2r_i)^2(t-t_i))}{\lambda^{p-1}},\\
	\end{aligned}\right.
	\end{equation}
	\[
	\tilde{\Om}=\left\{x_i+\f{x-x_i}{2\lambda^{\f{p-2}{2}}r_i}:x\in \Om\right\},
	\]
	as $\f{2n}{n+2}<p<2$, where $z_i=(y_i, t_i)$.
	
	By a simple calculation, it can be verified that $\bar{u}_i$ solves
	$$
	(\bar{u}_i)_t-\di\, \bar{\mathbf{a}}_i(D \bar{u}_i,x,t)=\di\, (|\bar{F}_i|^{p-2}\bar{F}_i) \quad \text{in} \quad K_{5}(z_i).
	$$
	
	Sine $B_{5}(x_i)\cap \partial \tilde\Om \neq \emptyset$, there is a point $x_0^i\in B_{5}(x_i)\cap \partial \tilde\Om$. From the definition of the $(\delta, R_0)$ Reifenberg flat domain, it can be seen that there exists a coordinate system, whose variables are still denoted by $x=(x_1,\dots,x_n)$ with origin at some interior point of $\Omega$ such that in this new coordinate system $x^i_0=(0,\ldots,0, -\f{10 \delta}{1-\delta})$ and we have
	\begin{equation}\label{eq4-proof-prop1mainthm}
	B_{15}^+ \subset \Omega\cap B_{15}\subset B_{15}\cap \{x: x_n>-180 \delta \}.
	\end{equation}
	If we choose $\delta<\f{1}{1000}$, then we have 
	\begin{equation}\label{balls relations}
	K_{5}(z_i)\subset K_{15}(0,t_i)\subset K_{20}(z_i).
	\end{equation}
	
	This along with Lemma \ref{coveringlemma} 
	implies that 
	\begin{equation*}
	\fint_{K_{15}(0,t_i)}|D\bar u|^{p} dz \lesi \lambda, \ \ \text{and} \ \ \fint_{K^\lambda_{15}(0,t_i)}|\bar F|^{p} dz \lesi\delta\lambda.
	\end{equation*}
	
	Therefore, by Corollary \ref{cor1} there exists $\delta$ so that we can find $\bar v_i$ defined in $K_9(0,t_i)$ satisfying
	$$
	\|\bar v_i\|_{L^\vc(K_9(0,t_i))}\lesi 1,
	$$
	and
	$$
	\fint_{K_9(0,t_i)}|D(\bar u_i-\bar v_i)|^pdxdt\leq \epsilon^p.
	$$
	On the other hand, we have $K_3(z_i)\subset K_9(0,t_i)\subset K_{15}(z_i)$, we imply that there exist $\delta$ and $A_1$ so that
	$$
	\|\bar v_i\|_{L^\vc(K_3(z_i))}\leq A_1,
	$$
	and
	$$
	\fint_{K_3(z_i)}|D(\bar u_i-\bar v_i)|^pdxdt\leq \epsilon^p.
	$$
We now consider the following rescaled map:
\begin{equation}\label{Scaledmap3}
	v_i(x,t)=\left\{
	\begin{aligned}
	&2r_i\lambda \bar{v}_i\left(x_i+\f{x-x_i}{2r_i},t_i+\f{t-t_i}{\lambda^{2-p}(2r_i)^2}\right), \ \ \ p\geq 2,\\
	&2r_i\lambda^{p/2} \bar{v}_i\left(x_i+\f{x-x_i}{2\lambda^{\f{p-2}{2}}r_i},t_i+\f{t-t_i}{(2r_i)^2}\right), \ \ \ \f{2n}{n+2}<p< 2.\\
		\end{aligned}\right.
\end{equation}
	Then it is easy to see that $v_i$ is defined in $K_{6r_i}^\lambda(z_i)$ and satisfies \eqref{eq1-proof ENlambda} and \eqref{eq2-proof ENlambda}.
	
	\medskip
	
	The case $B_{10r_i}^\lambda(z_i)\subset \Om$ can be done in the same manner with making use of Corollary \ref{cor1} instead of Proposition \ref{appProp3-inter}. Hence, we omit details.
\end{proof}

\begin{prop}
	\label{prop2}
	There exists $N_0>1$ so that for any $\lambda>\tilde{C}_0\lambda_0$ we have 
	\begin{equation}\label{eq-ENlambda}
	\begin{aligned}
	|E_{s_1}(N_0\lambda)|\leq &\epsilon^p \left[|E_{s_2}(\lambda/4)|  +\f{1}{(\delta\lambda)^\eta}\int_{\delta \lambda/4}^\vc t^{\eta}|\{z\in K_{s_2R}(z_0): |F(z)|>t\}|\f{dt}{t}\right].
	\end{aligned}
	\end{equation}
\end{prop}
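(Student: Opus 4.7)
The plan is to execute a good-$\lambda$ covering argument that assembles Lemma~\ref{coveringlemma}, Lemma~\ref{lem2-mainproof}, and Proposition~\ref{prop1}. First I would fix $N_0$ large (depending only on $p$, $n$, and $A_1$), chosen so that the $L^\vc$ control \eqref{eq1-proof ENlambda} on $Dv_i$ guarantees, for every $z\in E_{s_1}(N_0\lambda)\cap K^\lambda_{5r_i}(z_i)$,
\[
|D(u-v_i)(z)| \,\geq\, |Du(z)| - \|Dv_i\|_{L^\vc(K^\lambda_{6r_i}(z_i))} \,\geq\, \tfrac{1}{2}|Du(z)| \,>\, \tfrac{N_0\lambda}{2}.
\]
Equivalently, $E_{s_1}(N_0\lambda)\cap K^\lambda_{5r_i}(z_i)$ is contained in the set $\{z:|D(u-v_i)(z)|^p > (N_0\lambda/2)^p\}$.

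Next, Chebyshev's inequality combined with the comparison estimate \eqref{eq2-proof ENlambda} yields
\[
|E_{s_1}(N_0\lambda)\cap K^\lambda_{5r_i}(z_i)| \,\leq\, \f{2^p}{(N_0\lambda)^p}\int_{K^\lambda_{6r_i}(z_i)}|D(u-v_i)|^p\,dxdt \,\lesi\, \f{\epsilon^p}{N_0^p}\,|K^\lambda_{r_i}(z_i)|,
\]
where I used $K^\lambda_{5r_i}(z_i)\su K^\lambda_{6r_i}(z_i)$ and the fact that the parabolic $\lambda$-cylinders $K^\lambda_{r_i}(z_i)$ and $K^\lambda_{6r_i}(z_i)$ have comparable measure (by the explicit definition of $K^\lambda_r$ together with the Reifenberg bound \eqref{eq1-Reifenberg domain}). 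Since Lemma~\ref{coveringlemma}(a) gives the inclusion $E_{s_1}(N_0\lambda)\su E_{s_1}(\lambda)\su\bigcup_i K^\lambda_{5r_i}(z_i)$, summing over $i$ produces
\[
|E_{s_1}(N_0\lambda)| \,\lesi\, \f{\epsilon^p}{N_0^p}\sum_i |K^\lambda_{r_i}(z_i)|.
\]

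Finally I would feed each $|K^\lambda_{r_i}(z_i)|$ into Proposition~\ref{prop1}. Disjointness of the family $\{K^\lambda_{r_i}(z_i)\}$ collapses $\sum_i|K^\lambda_{r_i}(z_i)\cap E_{s_2}(\lambda/4)|$ to $|E_{s_2}(\lambda/4)|$, while interchanging the sum with the integral in the $F$-tail gives
\[
\sum_i \f{1}{(\delta\lambda)^\eta}\int_{\delta\lambda/4}^\vc t^{\eta}|\{z\in K^\lambda_{r_i}(z_i): |F(z)|>t\}|\f{dt}{t} = \f{1}{(\delta\lambda)^\eta}\int_{\delta\lambda/4}^\vc t^{\eta}\Big|\bigl\{z\in{\textstyle\bigcup_i} K^\lambda_{r_i}(z_i): |F(z)|>t\bigr\}\Big|\f{dt}{t},
\]
which is bounded by the corresponding tail over $K_{s_2R}(z_0)$ because $\bigcup_i K^\lambda_{r_i}(z_i)\su K_{s_2R}(z_0)$ (this follows from $r_i<10^{-6}(s_2-s_1)R$, $z_i\in K_{s_1R}(z_0)$, and the fact that for $\lambda>1$ the $\lambda$-cylinders sit inside their unit counterparts, i.e.\ $K^\lambda_{r_i}(z_i)\su K_{r_i}(z_i)$, in both ranges of $p$). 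Relabeling $\epsilon^p/N_0^p$ as a new $\epsilon^p$ recovers \eqref{eq-ENlambda} verbatim.

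The only delicate step I anticipate is the separation in the first paragraph: it works uniformly in $\lambda$ only because the $L^\vc$ bound on $Dv_i$ in \eqref{eq1-proof ENlambda} scales linearly in $\lambda$ after taking a $p$-th root, which is precisely what the scaled ansatz \eqref{Scaledmap3} in Lemma~\ref{lem2-mainproof} is designed to produce. Everything else is routine bookkeeping with the Vitali decomposition.
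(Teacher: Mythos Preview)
Your proposal is correct and follows essentially the same route as the paper: cover $E_{s_1}(N_0\lambda)$ by the Vitali family from Lemma~\ref{coveringlemma}, choose $N_0$ so that the $L^\infty$ bound on $Dv_i$ from Lemma~\ref{lem2-mainproof} forces the level set into $\{|D(u-v_i)|>N_0\lambda/2\}$, apply Chebyshev together with \eqref{eq2-proof ENlambda} to get $c\epsilon^p|K^\lambda_{r_i}(z_i)|$, and then invoke Proposition~\ref{prop1} plus disjointness to sum. The only cosmetic difference is that the paper works directly on $K^\lambda_{6r_i}(z_i)$ rather than $K^\lambda_{5r_i}(z_i)$ and takes the explicit choice $N_0=(2A_1)^{1/p}$.
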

\begin{proof}
	
Since $E_{s_1}(N_0\lambda)\subset E(\lambda)$, we have
$$
E_{s_1}(N_0\lambda)=\{z\in E(\lambda): |Du(z)|>N_0\lambda\}.
$$
This along with Lemma \ref{coveringlemma} implies
$$
|E_{s_1}(N_0\lambda)|\leq \sum_{i}|\{z\in K_{6r_i}^\lambda(z_i): |Du(z)|>N_0\lambda\}|.
$$
Taking $N_0=(2A_1)^{1/p}$, from Lemma \ref{lem2-mainproof} we have
$$
\begin{aligned}
\sum_{i}&|\{z\in K_{6r_i}^\lambda(z_i): |Du(z)|>N_0\lambda\}|\\
&\leq \sum_{i}\Big[\Big|\Big\{z\in K_{6r_i}^\lambda(z_i): |D(u-v_i)(z)|>\f{N_0\lambda}{2}\Big\}\Big|+\Big|\Big\{z\in K_{6r_i}^\lambda(z_i): |Dv_i(z)|>\f{N_0\lambda}{2}\Big\}\Big|\Big]\\
&=\sum_{i}\Big|\Big\{z\in K_{6r_i}^\lambda(z_i): |D(u-v_i)(z)|>\f{N_0\lambda}{2}\Big\}\Big|\\
&\leq \sum_{i} \f{2^p}{(N_0\lambda)^p}\int_{K^{\lambda}_{6r_i}(z_i)}|D(u-v_i)|^pdxdt\\
&\leq c\epsilon^p \sum_{i} |K^\lambda_{6r_i}(z_i)|\\
&\leq c\epsilon^p \sum_{i} |K^\lambda_{r_i}(z_i)|.
\end{aligned}
$$
This, Proposition \ref{lem2-mainproof} and the disjointness of the family $\{K_{r_i}^{\lambda}(z_i)\}$ imply the estimate \eqref{eq-ENlambda} as desired.
\end{proof}

We now recall the following auxiliary lemma in \cite[Lemma 4.3]{HL}.
\begin{lem}\label{HF'sLemma}
	Let $f$ be a bounded nonnegative function on $[a_1, a_2]$ with $0<a_1<a_2$. Assume that for any $a_1\leq x_1\leq x_2\leq a_2$ we have
	$$
	f(x_1)\leq \theta_1 f(x_2)+\f{A_1}{(x_2-x_1)^{\theta_2}}+A_2,
	$$
	where $A_1, A_2>0$, $0<\theta_1<1$ and $\theta_2>0$. Then, there exists $c=c(\theta_1,\theta_2)$ so that
	$$
	f(x_1)\leq c\Big[\f{A_1}{(x_2-x_1)^{\theta_2}}+A_2\Big].
	$$
\end{lem}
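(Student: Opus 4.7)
The plan is to prove this by a standard iteration/bootstrap argument (the Giaquinta--Giusti / Han--Lin "hole-filling" iteration). The starting observation is that a single application of the hypothesis only shows $f(x_1) \le \theta_1 f(x_2) + \cdots$, so one must convert the factor $\theta_1 < 1$ into an arbitrarily small multiplier by iterating between closely spaced points and summing a geometric series. To do this I would first pick a ratio $\tau \in (0,1)$ with $\theta_1 \tau^{-\theta_2} < 1$, which is possible precisely because $\theta_1<1$; a concrete choice such as $\tau = ((1+\theta_1)/2)^{1/\theta_2}\cdot \theta_1^{-1/\theta_2}$ makes $\theta_1\tau^{-\theta_2}=(1+\theta_1)/2$ and depends only on $(\theta_1,\theta_2)$. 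I would then build an increasing sequence $t_0 = x_1$, $t_{k+1} = t_k + (1-\tau)\tau^k(x_2-x_1)$, whose increments form a geometric series summing to $x_2 - x_1$, so that $t_k \nearrow x_2$ and every $t_k$ lies in $[x_1,x_2]\subset[a_1,a_2]$.

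Applying the hypothesis to the consecutive pair $(t_k, t_{k+1})$ yields
\[
f(t_k) \le \theta_1 f(t_{k+1}) + \f{A_1}{(1-\tau)^{\theta_2}\tau^{k\theta_2}(x_2-x_1)^{\theta_2}} + A_2.
\]
Iterating $N$ times gives
\[
f(x_1) \le \theta_1^N f(t_N) + \sum_{k=0}^{N-1}\theta_1^k\left[\f{A_1}{(1-\tau)^{\theta_2}\tau^{k\theta_2}(x_2-x_1)^{\theta_2}} + A_2\right].
\]
Letting $N \to \vc$, boundedness of $f$ and $\theta_1<1$ force $\theta_1^N f(t_N)\to 0$, while the two geometric sums converge — the first has ratio $\theta_1\tau^{-\theta_2}=(1+\theta_1)/2<1$ and the second has ratio $\theta_1$. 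Collecting constants produces $c=c(\theta_1,\theta_2)$ as claimed.

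There is no real obstacle here; the only point that deserves attention is the compatibility inequality $\theta_1\tau^{-\theta_2}<1$, which is what balances the recursion decay $\theta_1$ against the blow-up $\tau^{-k\theta_2}$ of the inverse-distance term and is what fixes $\tau$ (and hence $c$) in terms of $\theta_1,\theta_2$ alone. The boundedness of $f$ on $[a_1,a_2]$ is used only to kill the residual $\theta_1^N f(t_N)$ in the limit; in the application to this paper the function being iterated is manifestly bounded on the chosen subinterval, so the hypothesis is harmless.
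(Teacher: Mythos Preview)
Your argument is the standard Han--Lin iteration and is essentially correct; the paper itself does not prove this lemma but merely quotes it from \cite[Lemma 4.3]{HL}, where exactly this bootstrap is carried out. One small slip: your explicit formula $\tau = ((1+\theta_1)/2)^{1/\theta_2}\theta_1^{-1/\theta_2}$ gives $\tau^{\theta_2}=(1+\theta_1)/(2\theta_1)>1$, so this $\tau$ is not in $(0,1)$; the correct choice is the reciprocal, $\tau=(2\theta_1/(1+\theta_1))^{1/\theta_2}$, which indeed lies in $(0,1)$ and yields $\theta_1\tau^{-\theta_2}=(1+\theta_1)/2<1$ as you want. With that correction the rest of the argument goes through verbatim.
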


We now ready to give the proof of Theorem \ref{mainthm1}.

\begin{proof}[Proof of Theorem \ref{mainthm1}:]
	Since $\Om_T$ is a bounded domain. It suffices to prove that there exists a constant $C>0$ independing of $u$ and $x_0$ so that
	$$
	\| |Du|\|_{L^{q,r}(K_{R}(z_0))}\leq C\left[  \|F\|_{L^{q,r}(\Om_T)}+\|F\|_{L^{q,r}(\Om_T)}^{d}\right].
	$$
	
	For each $k>0$ we define $|Du|_k=\min\{k, |Du|\}$. Then $|Du|_k\in L^{q,r}(\Om_T)$ for all $q>p, 0<r\leq \vc$. We set $E^k_s(\lambda)=\{z\in K_{sR}(z_0): |Du(z)|_k>\lambda\}$ for $s>0$.
	
	From \eqref{eq-ENlambda}, it follows immediately that there exists $C$ independing of $k$ so that
	\begin{equation}\label{eq-Duk}
	|E^k_{s_1}(N_0\lambda)|\leq C\epsilon^p\Big[ |E^k_{s_2}(\lambda/4)|  +\f{1}{(\delta\lambda)^\eta}\int_{\delta \lambda/4}^\vc t^{\eta}|\{z\in K_{s_2R}(z_0): |F(z)|>t\}|\f{dt}{t}\Big]
	\end{equation}
	
	\noindent{\bf Case 1: $0<r<\vc$.} We have
	$$
	\begin{aligned}
	\| |Du|_k\|_{L^{q,r}(K_{s_1R}(z_0))}^r&=C\int_0^\vc \left[\lambda^q|\{z\in K_{s_1R}(z_0): |Du(z)|_k>N_0\lambda\}|\right]^{r/q}\f{d\lambda}{\lambda}\\
	&=\int_0^{\tilde{C}_0\lambda_0}\ldots +\int_{\tilde{C}_0\lambda_0}^\vc\ldots:=I_1+I_2. 
	\end{aligned}
	$$
	It is easy to see that 
	$$
	\begin{aligned}
    I_1&\leq C|\Om_T|^{r/q} (\tilde{C}_0\lambda_0)^r= C(s_2-s_1)^{-\f{(n+2)dr}{p}}\Big[\Big(\fint_{K_{2R}(z_0)}|Du|^{p}dxdt\Big)^{\f{dr}{p}}+\Big(\f{1}{\delta^{\eta}}\fint_{K_{2R}(z_0)}(|F|^{\eta}+1)dxdt\Big)^{\f{dr}{\eta}}\Big]\\
    &\leq C(s_2-s_1)^{-\f{(n+2)dr}{p}}\Big(\int_{\Om_T} |F|^\eta dxdt\Big)^{\f{rd}{\eta}}\\
    &\leq C(s_2-s_1)^{-\f{(n+2)dr}{p}}\|F\|_{L^{q,r}(\Om_T)}^{rd}.
	\end{aligned}
	$$
	To take care of the second term $I_2$, we apply \eqref{prop2} to write
	$$
	\begin{aligned}
	I_2\leq &C\epsilon^{p r/q}\int_{C_0}^\vc [\lambda^{q}|\{z\in K_{s_2R}(z_0): |Du(z)|_k >\lambda/4\}|]^{r/q}\f{d\lambda}{\lambda}\\
	& \ \ \ \ + C\epsilon^{p r/q}\int_{\tilde{C}_0\lambda_0}^\vc  \lambda^{(q-\eta)r/q}\left[\int_{\delta\lambda/4}^\vc t^{\eta}|\{z\in \Om_T: |F(z)|>t\}|\f{dt}{t}\right]^{r/q}\f{d\lambda}{\lambda}\\
	&= I_{21}+I_{22}.
	\end{aligned}
	$$
	Obviously, $I_{21}\leq C_1\epsilon^{p r/q} \| |Du|_k\|_{L^{q,r}(K_{s_1R}(z_0))}^r$. In order to take case of the second term, we consider three cases.
	
    \textbf{Subcase 1.1: $q<r<\vc$.}
    Applying Hardy's inequality (see for example \cite[Theorem 330]{HLP}), we obtain
    $$
    \begin{aligned}
    I_{22}&\leq C(\delta)\epsilon^{p r/q}\int_0^\vc \lambda^{(q-\eta)r/q}\lambda^{\eta r/q}|\{z\in \Om_T: |F(z)|>\lambda\}|^{r/q}\f{d\lambda}{\lambda}\\
    &=C(\delta)\epsilon^{p r/q}\int_0^\vc \lambda^{r}|\{z\in \Om_T: |F(z)|>\lambda\}|^{r/q}\f{d\lambda}{\lambda}\\
    &=C\|F\|^r_{L^{q,r}(\Om_T)}.
    \end{aligned}
    $$ 
   Hence,
   $$
   \begin{aligned}
   \||Du|_k\|^r_{L^{q,r}(K_{s_1R}(z_0))}\leq& C_1\epsilon^{p r/q}\||Du|_k\|^r_{L^{q,r}((K_{s_2R}(z_0))}\\
   & \ \ +C(\|F\|_{L^{q,r}(\Om_T)}^r+(s_2-s_1)^{-\f{(n+2)dr}{p}}\|F\|_{L^{q,r}(\Om_T)}^{rd}).
   	\end{aligned}
   	$$
   By choosing $\epsilon$ so that $C_1\epsilon^{p r/q}<1$ and then and applying 
   Lemma \ref{HF'sLemma} for  $f(s)=\||Du|_k\|_{L^{q,r}(K_{sR}(z_0))}$, $a_1=1, a_2=2$, $A_1=\|F\|_{L^{q,r}(\Om_T)}^{d}$ and $A_2=\|F\|_{L^{q,r}(\Om_T)}$ we deduce that there exists $C$ independent of $k$ so that 
   $$
   \| |Du|_k \|_{L^{q,r}(K_{s_1R}(z_0))}\lesi  \|F\|_{L^{q,r}(\Om_T)}+(s_2-s_1)^{-\f{(n+2)d}{p}}\|F\|_{L^{q,r}(\Om_T)}^{d}.
   $$
   This implies 
   $$
   \| |Du|_k \|_{L^{q,r}(K_{R}(z_0))}\lesi  \|F\|_{L^{q,r}(\Om_T)}+\|F\|_{L^{q,r}(\Om_T)}^{d}.
   $$
   Letting $k\to \vc$, we obtain
   $$
   \| |Du|\|_{L^{q,r}(K_{R}(z_0))}\lesi  \|F\|_{L^{q,r}(\Om_T)}+\|F\|_{L^{q,r}(\Om_T)}^{d}.
   $$
   
   \textbf{Subcase 1.2: $0<r\leq q$.} In order to deal with this case, we need the following variant of reverse-H\"older's inequality in \cite[Lemma 3.5]{Ba}
   
   \begin{lem}
   	\label{lem-reverseHolder}
   	Let $h:[0,\vc)\to [0,\vc)$ be a non-decreasing, measurable functions and let $1\leq \alpha\leq \vc$ and $r>0$. Then there exists $C>0$ so that for any $\lambda>0$ we have
   	$$
   	\Big[\int_{\lambda}^{\vc}\left(t^r h(t)\right)^\alpha \f{dt}{t}\Big]^{1/\alpha}\leq \lambda^r h(\lambda)+C\int_{\lambda}^{\vc}t^r h(t)\f{dt}{t}, \ \ \ \alpha<\vc,
   	$$
   	and
   	$$
   	\sup_{t>\lambda }t^r h(t)\leq C\lambda^r h(\lambda)+C\int_{\lambda}^{\vc}t^r h(t)\f{dt}{t}, \ \ \ \alpha=\vc.
   	$$
   \end{lem}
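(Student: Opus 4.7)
The plan is to extract a single dyadic reverse-H\"older estimate for monotone $h$ and then derive both halves of the lemma from it. We read the hypothesis as ``$h$ non-increasing'', which is how the lemma is used in the proof of Theorem \ref{mainthm1} (applied to distribution functions of $|F|$). The preliminary step: since $h$ is non-increasing, $h(s)\geq h(t)$ for every $s\in[t/2,t]$, so
\begin{equation*}
\int_{t/2}^{t} s^{r}h(s)\f{ds}{s}=\int_{t/2}^{t} s^{r-1}h(s)\,ds\geq h(t)\cdot\f{t^{r}(1-2^{-r})}{r},
\end{equation*}
which rearranges to $t^{r}h(t)\leq C_{r}\int_{t/2}^{t} s^{r}h(s)\,ds/s$ with $C_{r}=r/(1-2^{-r})$. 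This ``local reverse-H\"older bound'' is what drives everything.

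For $\alpha=\vc$, fix $t>\lambda$ and split into two regimes. If $\lambda<t\leq 2\lambda$, monotonicity of $h$ gives $t^{r}h(t)\leq(2\lambda)^{r}h(\lambda)=2^{r}\lambda^{r}h(\lambda)$. If $t>2\lambda$, then $[t/2,t]\subset[\lambda,\vc)$, and the preliminary estimate yields $t^{r}h(t)\leq C_{r}\int_{\lambda}^{\vc}s^{r}h(s)\,ds/s$. Taking the supremum over $t>\lambda$ and adding the two bounds gives the $\alpha=\vc$ inequality.

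For $1\leq\alpha<\vc$, decompose the integral $\int_{\lambda}^{\vc}(t^{r}h(t))^{\alpha}\f{dt}{t}$ as $\int_{\lambda}^{2\lambda}+\int_{2\lambda}^{\vc}$. The first piece is bounded by $(\log 2)(2^{r}\lambda^{r}h(\lambda))^{\alpha}$ by monotonicity of $h$. For the second, write $(t^{r}h(t))^{\alpha}=(t^{r}h(t))^{\alpha-1}\cdot t^{r}h(t)$; for $t\geq 2\lambda$, the preliminary estimate gives the uniform bound $(t^{r}h(t))^{\alpha-1}\leq C_{r}^{\alpha-1}\bigl(\int_{\lambda}^{\vc}s^{r}h(s)\,ds/s\bigr)^{\alpha-1}$, which is independent of $t$ and can be pulled outside the integral; then integrating the remaining factor $t^{r}h(t)$ against $dt/t$ recovers another copy of $\int_{\lambda}^{\vc}s^{r}h(s)\,ds/s$. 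Combining the two pieces, taking $\alpha$-th roots, and using the elementary inequality $(a+b)^{1/\alpha}\leq a^{1/\alpha}+b^{1/\alpha}$ yields the first stated inequality. The only mildly delicate point is recognising that the dyadic splitting at $2\lambda$ is necessary precisely because the preliminary estimate requires $t/2\geq\lambda$, so that the strip $t\in(\lambda,2\lambda]$ must be absorbed into the boundary term $\lambda^{r}h(\lambda)$ separately.
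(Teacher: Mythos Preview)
Your proof is correct. The paper does not actually prove this lemma; it merely quotes it from \cite[Lemma 3.5]{Ba}, so there is nothing to compare against. Your observation that the hypothesis must be ``$h$ non-increasing'' (not non-decreasing) is right and matches the paper's own application to $h(t)=|\{|F|>t\}|^{r/q}$; this is a typo in the statement. Your dyadic argument---the local reverse estimate $t^{r}h(t)\leq C_{r}\int_{t/2}^{t}s^{r}h(s)\,ds/s$, followed by the split at $2\lambda$---is clean and yields constants depending only on $r$ (since $(\log 2)^{1/\alpha}\leq 1$ and $C_{r}^{(\alpha-1)/\alpha}\leq C_{r}$ for $\alpha\geq 1$). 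One cosmetic point: your bound produces $2^{r}\lambda^{r}h(\lambda)$ rather than $\lambda^{r}h(\lambda)$ for the boundary term, so strictly speaking you prove the inequality with a constant $C$ in front of both terms; this is harmless for the application in Subcase~1.2, where all constants are absorbed into a generic $C$ anyway.
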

   
   We now apply Lemma \ref{lem-reverseHolder} to obtain
   $$
   \begin{aligned}
   \left[ \int_{\delta\lambda/4}^\vc t^{\eta}|\{z\in K_{s_2R}(z_0): |F(z)|>t\}|\f{dt}{t}\right]^{r/q}&=\left\{\int_{\delta\lambda/4}^\vc \left[t^{\eta r/q}|\{z\in K_{s_2R}(z_0): |F(z)|>t\}|^{r/q}\right]^{q/r}\f{dt}{t}\right\}^{r/q}\\
   &\leq C\lambda^{\eta r/q}|\{z\in \Om_T: |F(z)|>\delta\lambda/4\}|^{r/q}\\
   &\ \ \ +C\int_{\delta\lambda/4}^\vc t^{\eta r/q}|\{z\in \Om_T: |F(z)|>t\}|^{r/q}\f{dt}{t}.
   \end{aligned}
   $$
   Inserting this into the expression of $I_{21}$, we get that
   $$
   \begin{aligned}
   I_{22}&\leq   C\epsilon^{p r/q}\int_{0}^\vc \lambda^{(q-\eta)r/q}\lambda^{\eta r/q}|\{z\in \Om_T: |F(z)|>\delta\lambda/4\}|^{r/q}\f{d\lambda}{\lambda}\\
   &\ \ \ +C\epsilon^{p r/q}\int_{0}^\vc \lambda^{(q-\eta)r/q}\int_{\delta\lambda/4}^\vc t^{\eta r/q}|\{z\in K_{s_2R}(z_0): |F(z)|>t\}|^{r/q}\f{dt}{t}\f{d\lambda}{\lambda}:=I_{22}^1+I_{22}^2.
   \end{aligned}
   $$
   Obviously,
   $$
   I_{22}^2\leq C \epsilon^{p r/q}\int_{0}^\vc \lambda^{(q-\eta)r/q}\lambda^{\eta r/q}|\{z\in \Om_T: |F(z)|>\delta\lambda\}|^{r/q}\f{d\lambda}{\lambda}\sim  \|F\|^r_{L^{q,r}(\Om_T)}.
   $$
   Using Fubini's theorem, we can dominate the term $I_{21}^2$ by
   $$
   \begin{aligned}
   I_{22}^2&\leq C\epsilon^{p r/q}\int_{0}^\vc \lambda^{(q-\eta)r/q}\int_{\lambda/4}^\vc t^{\eta r/q}|\{z\in \Om_T: |F(z)|>t\}|^{r/q}\f{dt}{t}\f{d\lambda}{\lambda}\\
   &=C\epsilon^{p r/q}\int_{0}^\vc t^{\eta r/q}|\{z\in \Om_T: |F(z)|>t\}|^{r/q}\int_0^{\f{t}{4\delta}}\lambda^{(q-\eta)r/q} \f{d\lambda}{\lambda}\f{dt}{t}\\
   &=C\int_{0}^\vc t^{r}|\{z\in \Om_T: |F(z)|>t\}|^{r/q}\f{dt}{t}\\
   &=   C\|F\|^r_{L^{q,r}(\Om_T)}.
	\end{aligned}
	$$
	Taking these teo estimates $I_{22}^1$ and $I_{22}^2$ into account, we imply that
		$$
	I_{22}\leq C\|F\|^r_{L^{q,r}(\Om_T)}.
	$$
	Therefore,
	$$
	\||Du|_k\|^r_{L^{q,r}(K_{s_1R}(z_0))}\leq C_2\epsilon^{p r/q}\||Du|_k\|^r_{L^{q,r}(K_{s_2R}(z_0))}+C(\|F\|_{L^{q,r}(\Om_T)}^r+(s_2-s_1)^{-\f{(n+2)dr}{p}}\|F\|_{L^{q,r}(\Om_T)}^{rd}).
	$$
	By choosing $\epsilon$ so that $C_2\epsilon^{p r/q}<1$, and arguing similarly to the Subcase 1.1, we obtain
	$$
	\| |Du|\|_{L^{q,r}(K_{R}(z_0))}\lesi  \|F\|_{L^{q,r}(\Om_T)}+\|F\|_{L^{q,r}(\Om_T)}^{d}.
	$$
		\medskip
	
	\noindent\textbf{Case 2: $r=\vc$.} In this situation, we have
	$$
	\begin{aligned}
	\||Du|_k\|_{L^{q,\vc}(K_{s_1R(z_0)})}&\le C\sup_{\lambda>0}\lambda|\{z\in \Omega_T: |Du(z)|_k>N_0\lambda\}|^{1/q}\\
	&=\sup_{0<\lambda\leq \tilde{C}_0\lambda_0}\ldots + \sup_{\lambda>\tilde{C}_0\lambda_0}\ldots=J_1+J_2. 
	\end{aligned}
	$$
	Obviously, we have
	$$
	\begin{aligned}
	J_1&\leq C|\Om_T|^{1/q} \tilde{C}_0\lambda_0= C(s_2-s_1)^{-\f{(n+2)d}{qp}}\Big(\int_{\Om_T} |F|^p +|Du|^pdxdt\Big)^{\f{d}{p}}\\
	&\lesi C(s_2-s_1)^{-\f{(n+2)d}{qp}}\Big(\int_{\Om_T} |F|^pdxdt\Big)^{\f{d}{p}}\\
	&\lesi C(s_2-s_1)^{-\f{(n+2)d}{qp}}\|F\|_{L^{q,\vc}(\Om_T)}^{d}.
	\end{aligned}
	$$
	For the second term, applying \eqref{eq-Duk} we obtain
	$$ 
	\begin{aligned}
	J_2&\leq \sup_{\lambda>\tilde{C}_0\lambda_0}\lambda\Big\{c\epsilon^p|\{z\in K_{s_2R}(z_0): |Du(z)|_k>t\}|+\f{c\epsilon^p}{(\delta\lambda)^\eta}\int_{\delta \lambda/4}^\vc t^{\eta}|\{z\in \Om_T: |F(z)|>t\}|\f{dt}{t}\Big]\Big\}^{1/q}\\
	&\leq c\epsilon^{p/q}\||Du|_k\|_{L^{q,\vc}(K_{s_2R}(z_0))} +\f{c\epsilon^{p/q}}{\delta^{\eta/q}}\Big[\lambda^{q-\eta}\int_{\delta \lambda/4}^\vc t^{\eta-q}t^{q}|\{z\in \Om_T: |F(z)|>t\}|\f{dt}{t}\Big]^{1/q}\\
	&\leq  c\epsilon^{p/q}\||Du|_k\|_{L^{q,\vc}(K_{s_2R}(z_0))}+\f{c\epsilon^{p/q}}{\delta^{\eta/q}}\Big[\lambda^{q-\eta}\int_{\delta \lambda/4}^\vc t^{\eta-q}\|F\|_{L^{q,\vc}(\Om_T)}^q\f{dt}{t}\Big]^{1/q}\\
	&\leq C_3\epsilon^{p/q}\||Du|_k\|_{L^{q,\vc}(K_{s_2R}(z_0))}+C\|F\|_{L^{q,\vc}(\Om_T)}.
	\end{aligned}
	$$
	Taking $\epsilon$ so that $C_3\epsilon^{p/q}<1$ and and arguing similarly to the Subcase 1.1, we obtain
	$$
	\| |Du|\|_{L^{q,\vc}(K_{R}(z_0))}\lesi  \|F\|_{L^{q,\vc}(\Om_T)}+\|F\|_{L^{q,\vc}(\Om_T)}^{d}.
	$$

	This completes our proof.
	\end{proof}
	
\bigskip

\textbf{Acknowledgement.} The authors would like to thank the referee for
useful comments and suggestions to improve the paper.
The first named author was supported by the research grant ARC DP140100649 from the Australian Research Council and
Vietnam's National Foundation for Science and Technology Development
(NAFOSTED) under Project 101.02--2016.25. The second named author was supported by the research grant ARC DP140100649.

\end{document}